    \NewDocumentCommand{\ceil}{s O{} m}{
  \IfBooleanTF{#1}
    {$\left\lceil#3\right\rceil$}
    {#2\lceil#3#2\rceil} 
}
\definecolor{USTpurple}{RGB}{80,7,120}
\newtheorem{thm}{Theorem}[section]
\newtheorem{prop}[thm]{Proposition}
\theoremstyle{definition}
\newtheorem{defn}[thm]{Definition}
\theoremstyle{remark}
\newcommand{\vertexdef}{\tikzstyle{vertex}=[draw,circle,fill=white,minimum size=15pt,inner sep=0pt]
			\tikzstyle{cv}=[white,draw,circle,fill=blue,minimum size = 15pt, inner sep=0pt]}
\begin{document}

\title{\bf{Zero Forcing of Generalized Hierarchical Products of Graphs}}
\author{Heather LeClair\thanks{Center for Applied Mathematics, University of St. Thomas, St. Paul,  MN   55105, USA}
    \and Tim Spilde\thanks{Center for Applied Mathematics, University of St. Thomas, St. Paul,  MN   55105, USA}
    \and Sarah Anderson\thanks{Department of Mathematics, Mail \#201, University of St. Thomas, 2115 Summit Avenue, St. Paul, MN 55015, USA (ande1298@stthomas.edu)}  
    \and Brenda Kroschel\thanks{Department of Mathematics, Mail \#OSS 201, University of St. Thomas, 2115 Summit Avenue, St. Paul, MN  55105, USA (bkkroschel@stthomas.edu)}}
\date{\today}

\maketitle

\begin{abstract}
   Zero forcing is a graph propagation process for which vertices fill-in (or propagate information to) neighbor vertices if all neighbors except for one, are filled. The zero-forcing number is the smallest number of vertices that must be filled to begin the process so that the entire graph or network becomes filled. In this paper, bounds are provided on the zero forcing number of generalized hierarchical products.
\end{abstract}

\section{Introduction}

A \textit{graph}, denoted $G=(V,E)$, consists of a set of vertices, $V(G) = \{v_1, v_2,...v_n\}$, and a set of edges, $E(G) = \{v_iv_j|$ $i,j \in \{1,...,n\}\}$. This paper will consider \textit{simple, undirected graphs}. A \textit{simple graph} has no more than one edge between two vertices and a single vertex cannot have an edge with itself (e.g. a loop). An \textit{undirected graph} has edges without orientation. The edges of the graph are expressed as a set of unordered pairs. The notation $v_iv_j$ denotes that there exists an edge between the vertices $v_i$ and $v_j$. 

\begin{figure}[H]\label{fig:basicgraph}
    \centering
\begin{tikzpicture}
    \vertexdef
    \node[vertex] (1) at (-2,1.154){1};
    \node[vertex] (2) at (-2,-1.154){2}; 
5

    \node[vertex] (3) at (2,1.154){3};
    \node[vertex] (4) at (2,-1.154){4};
    \draw (1) -- (2);
    \draw (2) -- (3);
    \draw (2) -- (4);
    \draw (3) -- (4);
\end{tikzpicture}
    \caption {$G =(\{1, 2, 3, 4\}$, $\{12$, $23$, $24$, $34$\})}
\end{figure}
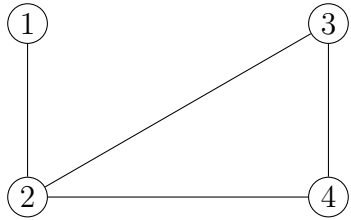

Two vertices that share an edge are considered \textit{neighbors} or \textit{adjacent}. The \textit{degree} of a vertex $v$ is determined by the number of adjacent vertices denoted $deg(v)$. 

\textit{Zero forcing} of a graph concerns the passing of information between neighboring vertices. This process follows a \textit{color change rule} which states that if a filled vertex, $v_n$, has only one neighbor, $v_m$,  that is unfilled, then $v_n$ \textit{forces} $v_m$ to be filled. When all of the vertices are filled, then the graph is \textit{color derived}. A \textit{zero forcing set} is any combination of vertices that are initially filled that have the potential to force the entire graph to be filled. A set with the least number of initial filled vertices is called a \textit{minimal zero forcing set}. The \textit{zero forcing number} of the graph, $Z(G)$, is the cardinality of a minimal zero forcing set. Let $S$ be a zero forcing set of the graph $G$. The \textit{propagation time}, denoted $pt(G,S)$ is the number of iterations needed for $S$ to force $G$ to be color-derived. 

\begin{center}
    \begin{tikzpicture}
        \vertexdef
        \node[cv] (1) at (-2,1.154){1};
        \node[vertex] (2) at (-2,-1.154){2};
        \node[cv] (3) at (2,1.154){3};
        \node[vertex] (4) at (2,-1.154){4};
        \draw (1) -- (2);
        \draw (2) -- (3);
        \draw (2) -- (4);
        \draw (3) -- (4);
    \end{tikzpicture}
    
\vspace{1cm}

\begin{tikzpicture}
        \vertexdef
        \node[cv] (1) at (-2,1.154){1};
        \node[cv] (2) at (-2,-1.154){2};
        \node[cv] (3) at (2,1.154){3};
        \node[vertex] (4) at (2,-1.154){4};
        \draw (1) -- (2);
        \draw (2) -- (3);
        \draw (2) -- (4);
        \draw (3) -- (4);
    \end{tikzpicture}
    
\vspace{1cm}

\begin{figure} [H]
    \centering
\begin{tikzpicture}
        \vertexdef
        \node[cv] (1) at (-2,1.154){1};
        \node[cv] (2) at (-2,-1.154){2};
        \node[cv] (3) at (2,1.154){3};
        \node[cv] (4) at (2,-1.154){4};
        \draw (1) -- (2);
        \draw (2) -- (3);
        \draw (2) -- (4);
        \draw (3) -- (4);
    \end{tikzpicture}

\caption{$Z(G)=2$; $pt(G,S)=2$}
\label{intrograph}

\end{figure}
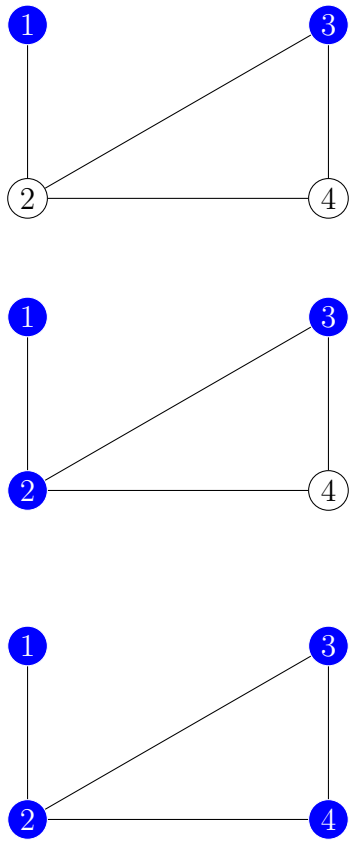
\end{center}

Consider the graph from Figure \ref{intrograph}. Assume one starts with the zero forcing set $S=\{1,2\}$. The vertex $1$ has only one unfilled 
neighbor, vertex $2$, so it forces vertex $2$ to be filled. The vertex $3$ has two neighbors, vertex $2$ and vertex $4$. The only unfilled neighbor of vertex $3$ is vertex $4$. Thus, vertex $3$ forces vertex $4$ to be filled, and the entire graph is filled. 

In this paper, bounds are given on the zero forcing number of generalized hierarchical products. In Section \ref{classes}, special classes of graphs are defined as well as their zero forcing number. Section \ref{products} provides an introduction to the Cartesian product and generalized hierarchical product. Section \ref{matrices} relates zero forcing to the minimum rank problem. In Section \ref{specialproducts}, bounds are given on the zero forcing number of the generalized hierarchical product for special classes of graphs, and in Section \ref{generalboundsec}, an upper bound is given on the zero forcing number of the generalized hierarchical product for any two arbitrary graphs. 

\section{Special Classes of Graphs}
\label{classes}
There are three special classes of graphs that are relevant to this paper: paths, cycles, and complete graphs. 

\begin{defn} A {path}, denoted $P_n$, consists of $n$ vertices $\{1,2,...,n\}$ and edges $\{12, 23,...,$ $(n-1)n\}$.

\end{defn}

\begin{figure} [H]
    \centering
    \begin{tikzpicture}
        \vertexdef
            \node[cv] (1) at (-2,0) {1};
            \node[vertex] (2) at (-1,0) {2};
            \node[vertex] (3) at (0,0) {3};
            \node[vertex] (4) at (1,0) {4};
            \node[vertex] (5) at (2,0) {5};
            \draw (1) -- (2);
            \draw (2) -- (3);
            \draw (3) -- (4);
            \draw (4) -- (5);
    \end{tikzpicture}
    \caption{$G = P_5$}
\end{figure}

For a path, $Z(P_n)=1$ with a minimal zero forcing set $S=\{1\}$ or $\{n\}$. Having one unfilled neighbor, the vertex in $S$ can fill the remaining vertices one at a time in order. The propagation time is $pt(P_n,S)=n-1$.

\begin{defn} A {cycle}, denoted $C_n$, for $n\geq3$, consists of $n$ vertices $\{1,2,...,n\}$ and edges $\{12, 23,..., (n-1)n,n1\}$.

\end{defn}

\begin{figure} [H]
    \centering
    \begin{tikzpicture}
        \vertexdef
            \node[cv] (1) at (0,2) {1};
            \node[cv] (2) at (1.732,1) {2};
            \node[vertex] (3) at (0.866,-0.802) {3};
            \node[vertex] (4) at (-0.866,-0.802) {4};
            \node[vertex] (5) at (-1.732,1) {5};
            \draw (1) -- (2);
            \draw (2) -- (3);
            \draw (3) -- (4);
            \draw (4) -- (5);
            \draw (5) -- (1);
    \end{tikzpicture}
    \caption{$G = C_5$}
\end{figure}

For a cycle, $Z(C_n)=2$, and any two adjacent vertices form a minimal zero forcing set, $S$. Since each vertex in $S$ has one unfilled neighbor, either filled vertex can filled similarly its unfilled neighbor. Then the remaining vertices are filled around the cycle. The  propagation time is $pt(C_n,S)= \ceil[\big]{\frac{n-2}{2}}$.

\begin{defn} A {complete graph}, denoted $K_n$, consists of $n$ vertices $\{1,2,...,n\}$ and contains exactly one edge between each pair of distinct vertices.

\end{defn}

\begin{figure} [H]
    \centering
    \begin{tikzpicture}
        \vertexdef
            \node[cv] (1) at (0,2) {1};
            \node[cv] (2) at (1.732,1) {2};
            \node[cv] (3) at (0.866,-0.802) {3};
            \node[cv] (4) at (-0.866,-0.802) {4};
            \node[vertex] (5) at (-1.732,1) {5};
            \draw (1) -- (2);
            \draw (1) -- (3);
            \draw (1) -- (4);
            \draw (1) -- (5);
            \draw (2) -- (3);
            \draw (2) -- (4);
            \draw (2) -- (5);
            \draw (3) -- (4);
            \draw (3) -- (5);
            \draw (4) -- (5);
    \end{tikzpicture}
    \caption{$G = K_5$}
\end{figure}

 For a complete graph, $Z(K_n)=n-1$. In order for a filled vertex to fill an unfilled neighbor, $n-2$ of its neighbors must be filled. Thus, a set consisting of all but one of the vertices forms a minimal zero forcing set, $S$. The propagation time is $pt(K_n,S)=1$.

\section{Graph Products}
\label{products}
The two cases of graph products that this paper will consider are the \textit{Cartesian product} and the \textit{generalized hierarchical product}.

\subsection{Cartesian Product}
We will first consider the Cartesian product.

\begin{defn}
Given graphs $W$ and $H$, the Cartesian product, denoted $G = W \Box H$, is the graph with vertex set is $V(W\Box H) = V(W) \times V(H)$ where two vertices $(x_1,y_1)$ and $(x_2,y_2)$ are adjacent in $W
\Box H$ if either $x_1=x_2$ and $y_1y_2$ is an edge in $H$, or $y_1=y_2$
and $x_1x_2$ is an edge in $W$. 
\end{defn}

Essentially, the vertex set of $W\square H$ is the Cartesian product of the vertex sets of $W$ and $H$. The edge set is determined by factors the Cartesian product. That is, the edges of $W$ are copied along the vertices of $H$. The vertices in these copies of $G$ will be referred to as rows. An arbitrary \textit{row} $j=\{(x,j) \; | \; x \in V(W)\}$ for $j=1,...,h$. Further, the edges of $H$ are copied along the vertices of $W$. The vertices in these copies of $H$ will be referred to as columns. An arbitrary \textit{column} $i=\{(i,y) \; | \; y \in V(H)\}$ for $i=1,...,w$.

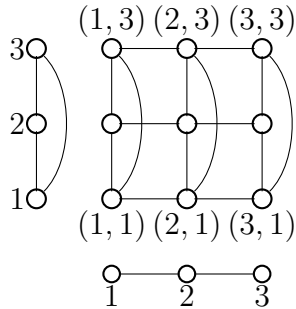
\begin{figure} [H]
    \centering
    \begin{tikzpicture}
        \vertexdef
         \filldraw[color=black, fill=white, thick](1,0) circle (0.11) node[anchor=north] {$1$}; 
         \filldraw[color=black, fill=white, thick](2,0) circle (0.11) node[anchor=north] {$2$};  
         \filldraw[color=black, fill=white, thick](3,0) circle (0.11) node[anchor=north] {$3$};
         
         \draw (1.1,0) -- (1.9,0);
         \draw (2.1,0) -- (2.9,0);
        
             \filldraw[color=black, fill=white, thick](0,1) circle (0.125) node[anchor=east] {$1$};
             \filldraw[color=black, fill=white, thick](0,2) circle (0.125) node[anchor=east] {$2$};
             \filldraw[color=black, fill=white, thick](0,3) circle (0.125) node[anchor=east] {$3$};
           
           \draw (0,1.1) -- (0,1.9);
           \draw (0,2.1) -- (0,2.9);
           
            \filldraw[color=black, fill=white, thick](1,1) circle (0.125) node[anchor=north] {$(1,1)$};
            \filldraw[color=black, fill=white, thick](2,1) circle (0.125) node[anchor=north] {$(2,1)$};
            \filldraw[color=black, fill=white, thick](3,1) circle (0.125) node[anchor=north] {$(3,1)$};
            
            \draw (1.1,1) -- (1.9,1);
            \draw (2.1,1) -- (2.9,1);
            \draw (1,1.1) -- (1,1.9);
           \draw (1,2.1) -- (1,2.9);
            
             \filldraw[color=black, fill=white, thick](1,2) circle (0.125) {};
             \filldraw[color=black, fill=white, thick](2,2) circle (0.125){};
            \filldraw[color=black, fill=white, thick](3,2) circle (0.125){};
            
            \draw (1.1,2) -- (1.9,2);
            \draw (2.1,2) -- (2.9,2);
            \draw (2,1.1) -- (2,1.9);
           \draw (2,2.1) -- (2,2.9);
            
          \filldraw[color=black, fill=white, thick](1,3) circle (0.125) node[anchor=south] {$(1,3)$};
          \filldraw[color=black, fill=white, thick](2,3) circle (0.125) node[anchor=south] {$(2,3)$};
          \filldraw[color=black, fill=white, thick](3,3) circle (0.125) node[anchor=south] {$(3,3)$};
           
           \draw (1.1,3) -- (1.9,3);
            \draw (2.1,3) -- (2.9,3);
            \draw (3,1.1) -- (3,1.9);
           \draw (3,2.1) -- (3,2.9);
          
            \draw (0.1,1.1)..controls (0.5,1.5) and (0.5,2.5)..(0.1,2.9);
            \draw (1.1,1.1)..controls(1.5,1.5)and (1.5,2.5)..(1.1,2.9);
            \draw (2.1,1.1)..controls (2.5,1.5) and (2.5,2.5)..(2.1,2.9);
            \draw (3.1,1.1)..controls (3.5,1.5) and (3.5,2.5)..(3.1,2.9);

    \end{tikzpicture}
    \caption{$G = P_3\square C_3$}
    \label{cartesian}
\end{figure}

\noindent
Figure \ref{cartesian} is an example of a Cartesian product of a path and cycle: $P_3\square C_3$. Notice that there are three copies of $P_3$ across the rows as well as three copies of $C_3$ along the columns.

The zero forcing number of the Cartesian product has been studied previously. In \cite{AIM}, an upper bound on the zero forcing number of the Cartesian product of any two arbitrary graphs was given as well as lower bounds on the zero forcing number of the Cartesian products with particular factors. In addition, \cite{proptime} studied the propagation time of various Cartesian products.

\subsection{Generalized Hierarchical Products}
The generalized hierarchical product of graphs was first introduced by Barri\`{e}re et al. in 2009 \cite{hier}. The generalized hierarchical product, which is a generalization of the the Cartesian product, can be used to can be used to model scale-free networks \cite{scalefree, newman}. In this paper, the ``generalized hierarchical product'' will be referred to as the ``hierarchical product.''

\begin{defn}
Given graphs $W$ and $H$ and vertex subset $U \subseteq V(W)$, the hierarchical product, denoted $G = W(U) \sqcap H$, is the graph with vertex set $V(W) \times V(H)$ where two vertices $(x_1,y_1)$ and $(x_2,y_2)$ are adjacent in $G = W(U) \sqcap H$ if either $x_1 \in U$, $x_1=x_2$ and $y_1y_2$ is an edge in $H$, or $y_1=y_2$
and $x_1x_2$ is an edge in $W$. The root set, $U$, determines the edge set for the columns of the graph. The  vertices in the root set are a subset of the vertices in $W$ and dictate the columns that receive the edge pattern of $H$. In general, the root set is denoted $U=\{i_1,i_2,...,i_k\}$ such that $i_1 < i_2<...<i_k$.
\end{defn}

A Cartesian product is an special case of a hierarchical product. A Cartesian product is a hierarchical product that has all of the vertices of $G$ included in the root set; that is, $U=V(W)$. 
Figure \ref{pathandcycle} depicts a hierarchical graph product of a path and cycle with a root set $U=\{1\}$. Notice that there are three copies of $P_3$ across the rows as well as one copy of $C_3$ along the column 1.

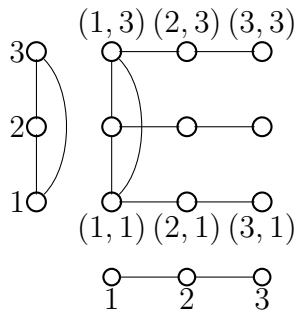
\begin{figure} [H]
    \centering
    \begin{tikzpicture}
        \vertexdef
        
            \filldraw[color=black, fill=white, thick](1,0) circle (0.11) node[anchor=north] {$1$}; 
         \filldraw[color=black, fill=white, thick](2,0) circle (0.11) node[anchor=north] {$2$};  
         \filldraw[color=black, fill=white, thick](3,0) circle (0.11) node[anchor=north] {$3$};
         
         \draw (1.1,0) -- (1.9,0);
         \draw (2.1,0) -- (2.9,0);
        
             \filldraw[color=black, fill=white, thick](0,1) circle (0.125) node[anchor=east] {$1$};
             \filldraw[color=black, fill=white, thick](0,2) circle (0.125) node[anchor=east] {$2$};
             \filldraw[color=black, fill=white, thick](0,3) circle (0.125) node[anchor=east] {$3$};
           
           \draw (0,1.1) -- (0,1.9);
           \draw (0,2.1) -- (0,2.9);
           
            \filldraw[color=black, fill=white, thick](1,1) circle (0.125) node[anchor=north] {$(1,1)$};
            \filldraw[color=black, fill=white, thick](2,1) circle (0.125) node[anchor=north] {$(2,1)$};
            \filldraw[color=black, fill=white, thick](3,1) circle (0.125) node[anchor=north] {$(3,1)$};
            
            \draw (1.1,1) -- (1.9,1);
            \draw (2.1,1) -- (2.9,1);
            \draw (1,1.1) -- (1,1.9);
           \draw (1,2.1) -- (1,2.9);
            
             \filldraw[color=black, fill=white, thick](1,2) circle (0.125) {};
             \filldraw[color=black, fill=white, thick](2,2) circle (0.125){};
            \filldraw[color=black, fill=white, thick](3,2) circle (0.125) {};
            
            \draw (1.1,2) -- (1.9,2);
            \draw (2.1,2) -- (2.9,2);
            
          \filldraw[color=black, fill=white, thick](1,3) circle (0.125) node[anchor=south] {$(1,3)$};
          \filldraw[color=black, fill=white, thick](2,3) circle (0.125)node[anchor=south] {$(2,3)$};
          \filldraw[color=black, fill=white, thick](3,3) circle (0.125) node[anchor=south] {$(3,3)$};
           
           \draw (1.1,3) -- (1.9,3);
            \draw (2.1,3) -- (2.9,3);

            \draw (0.1,1.1)..controls (0.5,1.5) and (0.5,2.5)..(0.1,2.9);
            \draw (1.1,1.1)..controls(1.5,1.5)and (1.5,2.5)..(1.1,2.9);
            
    \end{tikzpicture}
    \caption{$G = P_3(U)\sqcap C_3,$ $U=\{1\} $}
    \label{pathandcycle}
\end{figure}

\begin{defn}
The simple graphs $G_1=(V_1,E_1)$ and $G_2=(V_2,E_2)$ are \textit{isomorphic} if there exists a one-to-one and onto function $f$ from $V_1$ to $V_2$ with the property that $a$ and $b$ are adjacent in $G_1$ if and only if $f(a)$ and $f(b)$ are adjacent in $G_2$, for all $a$ and $b$ in $V_1$.
\end{defn}

In other words, when two simple graphs are isomorphic, there is a one-to-one correspondence between vertices of the two graphs that preserves the associated relationship.

Some graph products have different root sets, but are isomorphic. In Figure \ref{isograph}, the root set of each graph has two adjacent elements. The graph on the left has $U=\{1,2\}$ while the graph on the right has $U=\{2,3\}$. The right graph is the same exact graph as the left but horizontally rotated. Thus, they have the same zero forcing number. 


\begin{figure} [H]
\begin{center}
    \begin{tikzpicture}
        \vertexdef
           
            \filldraw[color=black, fill=white, thick](1,1) circle (0.125) node[anchor=north] {} ;
            \filldraw[color=black, fill=white, thick](2,1) circle (0.125) node[anchor=north] {};
            \filldraw[color=black, fill=white, thick](3,1) circle (0.125) node[anchor=north] {};
            
            \draw (1.1,2.1)..controls (1.5,2.5) and (2.5,2.5)..(2.9,2.1);
            \draw (1.1,1) -- (1.9,1);
            \draw (2.1,1) -- (2.9,1);
            \draw (1,1.1) -- (1,1.9);
           \draw (1,2.1) -- (1,2.9);
           \draw (1.1,1.1)..controls (1.5,1.5) and (2.5,1.5)..(2.9,1.1);
            
             \filldraw[color=black, fill=white, thick](1,2) circle (0.125) {};
             \filldraw[color=black, fill=white, thick](2,2) circle (0.125) {};
            \filldraw[color=black, fill=white, thick](3,2) circle (0.125) {};
            \draw (1.1,3.1)..controls (1.5,3.5) and (2.5,3.5)..(2.9,3.1) {};
            \draw (1.1,2) -- (1.9,2);
            \draw (2.1,2) -- (2.9,2);
            \draw (2,1.1) -- (2,1.9);
           \draw (2,2.1) -- (2,2.9);
            
          \filldraw[color=black, fill=white, thick](1,3) circle (0.125) {};
          \filldraw[color=black, fill=white, thick](2,3) circle (0.125) {};
          \filldraw[color=black, fill=white, thick](3,3) circle (0.125) {};
           
           \draw (1.1,3) -- (1.9,3);
            \draw (2.1,3) -- (2.9,3);
          
            \draw (1.1,1.1)..controls(1.5,1.5)and (1.5,2.5)..(1.1,2.9);
            \draw (2.1,1.1)..controls (2.5,1.5) and (2.5,2.5)..(2.1,2.9);

            \filldraw[color=black, fill=white, thick](5,1) circle (0.125) node[anchor=north] {};
            \filldraw[color=black, fill=white, thick](6,1) circle (0.125) node[anchor=north] {};
            \filldraw[color=black, fill=white, thick](7,1) circle (0.125) node[anchor=north] {};
            
            \draw (5.1,2.1)..controls (5.5,2.5) and (6.5,2.5)..(6.9,2.1);
            \draw (5.1,1) -- (5.9,1);
            \draw (6.1,1) -- (6.9,1);
            \draw (7,1.1) -- (7,1.9);
           \draw (7,2.1) -- (7,2.9);
            \draw (5.1,1.1)..controls (5.5,1.5) and (6.5,1.5)..(6.9,1.1);
            
             \filldraw[color=black, fill=white, thick](5,2) circle (0.125);
             \filldraw[color=black, fill=white, thick](6,2) circle (0.125);
            \filldraw[color=black, fill=white, thick](7,2) circle (0.125);
            \draw (5.1,3.1)..controls (5.5,3.5) and (6.5,3.5)..(6.9,3.1);
            \draw (5.1,2) -- (5.9,2);
            \draw (6.1,2) -- (6.9,2);
            \draw (6,1.1) -- (6,1.9);
           \draw (6,2.1) -- (6,2.9);
            
          \filldraw[color=black, fill=white, thick](5,3) circle (0.125) {};
          \filldraw[color=black, fill=white, thick](6,3) circle (0.125) {};
          \filldraw[color=black, fill=white, thick](7,3) circle (0.125) {};
           
           \draw (5.1,3) -- (5.9,3);
            \draw (6.1,3) -- (6.9,3);

            \draw (7.1,1.1)..controls(7.5,1.5)and (7.5,2.5)..(7.1,2.9);
            \draw (6.1,1.1)..controls (6.5,1.5) and (6.5,2.5)..(6.1,2.9);
            
    \end{tikzpicture}
    \caption{$G = C_3(U)\sqcap C_3$ with $U=\{1,2\}$ and $H= C_3(U)\sqcap C_3$ with $U=\{2,3\}$}
    \label{isograph}
\end{center}
\end{figure}
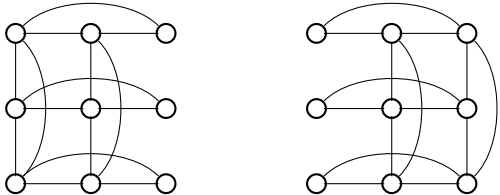

\section{Associated Matrices}
\label{matrices}
\subsection{Constructing an Associated Matrix}
For a simple graph $G=(V,E)$ with vertices $v_1,v_2,...,v_n$ define an  \textit{associated matrix}, $A(G)$, as the $n \times n$
matrix with the $(i,j)^{th}$ entry nonzero precisely when $v_i$ and $v_j$ are adjacent, and $0$ as when they are not adjacent. The diagonal entries that represent a single vertex are free to be chosen as zero or nonzero. The family of associated matrices can be represented by a matrix in which the nonzero entries are represented by a ``*" and the diagonal entries, which are free to be zero or nonzero, are represented by ``?".  Figure \ref{assocmatrix} is an example of a graph with its  associated zero-nonzero pattern matrix on the right.


\begin{figure}[H]
\begin{multicols}{2}
        \begin{center}
        \begin{tikzpicture}
            \vertexdef
            \node[vertex] (1) at (-2,1.154){1};
            \node[vertex] (2) at (-2,-1.154){2};
            \node[vertex] (3) at (2,1.154){3};
            \node[vertex] (4) at (2,-1.154){4};
            \draw  (1) -- (2);
            \draw  (4) -- (2);
            \draw  (3) -- (2);
            \draw  (3) -- (4);
        \end{tikzpicture}
        \end{center}

        \begin{center}
        $$\begin{pmatrix}
         ?& * & 0 & 0 \\
         * & ? & * & * \\
         0 & * & ? & * \\
         0 & * & * & ? \\
        \end{pmatrix}$$
        \end{center}
\end{multicols}
\caption{$G =(\{1, 2, 3, 4\}$, $\{12$, $23$, $24$, $34$\})}
\label{assocmatrix}
\end{figure}
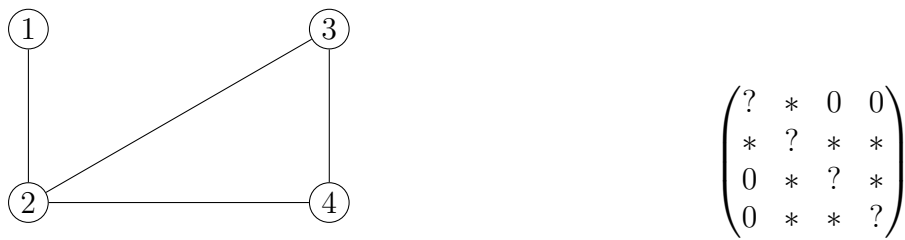

In addition, given a $n \times n$ matrix $A = (a_{ij})$, then the \textit{graph of $A$}, $G(A)$, is the graph with vertices $\{1, \ldots, n\}$ in which there exists an edge $ij$ for $1 \leq i < j \leq n$ if and only if $a_{ij} \neq 0$.

 Define the \textit{set of symmetric matrices} of graph $G$ as $S(G) = \{A \in S_n \; | \; G(A) = G\}$ in which $S_n$ is the set of all $n \times n$ symmetric matrices.  The \textit{minimum rank} of a graph $G$ is defined to be $mr(G)=\min\{rank(A) \; | \; A \in S(G)\}$. The minimum rank problem asks what is the minimum rank over all symmetric matrices $A(G)\in S(G)$ that have the associated zero, nonzero pattern.  In \cite{AIM}, it was proven that the minimum rank of a graph provides a lower bound on $Z(G)$.



\begin{prop} \cite{AIM}
Let $G=(V,E)$ be a graph, let $Z \subseteq V$ be a zero forcing set, and let $n$ be the number of columns in $A(G)$. Then \\ $n-mr(G) \leq |Z|$.
\label{minrank}
\end{prop}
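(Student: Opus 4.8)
The plan is to prove the equivalent statement that for \emph{every} real symmetric matrix $A \in S(G)$ one has $\operatorname{rank}(A) \ge n - |Z|$; taking the minimum over $A \in S(G)$ then yields $mr(G) \ge n - |Z|$, which rearranges to the claimed inequality. So fix an arbitrary $A = (a_{ij}) \in S(G)$. By rank–nullity it suffices to show $\dim(\ker A) \le |Z|$.

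The heart of the argument is the following claim: if $x \in \reals^{\,n}$ satisfies $Ax = 0$ and $x_i = 0$ for every $i \in Z$, then $x = 0$. To prove it, fix a valid sequence of forces witnessing that $Z$ is a zero forcing set, so that the process starting from $Z$ fills all of $V$, and induct on the step at which each vertex becomes filled. The base case is the set $Z$, on which $x$ vanishes by hypothesis. For the inductive step, suppose at some stage a filled vertex $u$ forces an unfilled vertex $w$; then every neighbor of $u$ other than $w$ is already filled, so by the inductive hypothesis $x_u = 0$ and $x_t = 0$ for every neighbor $t \ne w$ of $u$. Reading off the $u$-th coordinate of $Ax = 0$ gives $\sum_{t} a_{ut} x_t = 0$; the diagonal term $a_{uu}x_u$ vanishes since $x_u = 0$, every off-diagonal term with $t \ne w$ vanishes since either $t$ is not adjacent to $u$ (so $a_{ut}=0$) or $x_t = 0$, and hence $a_{uw} x_w = 0$. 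Since $u$ and $w$ are adjacent in $G = G(A)$ we have $a_{uw} \ne 0$, so $x_w = 0$, completing the induction. Thus $x_v = 0$ for all $v \in V$, i.e. $x = 0$.

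Given the claim, consider the linear map $\phi : \ker A \to \reals^{\,|Z|}$ sending $x$ to its restriction $(x_i)_{i \in Z}$. The claim says precisely that $\ker \phi = \{0\}$, so $\phi$ is injective and $\dim(\ker A) \le |Z|$. Therefore $\operatorname{rank}(A) = n - \dim(\ker A) \ge n - |Z|$, and since $A \in S(G)$ was arbitrary, $mr(G) \ge n - |Z|$, that is, $n - mr(G) \le |Z|$.

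The only real obstacle is the inductive claim, and within it the point requiring care is that the hypothesis ``$u$ forces $w$'' guarantees all \emph{other} neighbors of $u$ are already filled at that moment; this forces the induction to be organized along a fixed valid ordering of the forces rather than merely over the final filled set. Once that ordering is in place each step is just the single scalar equation coming from one row of $Ax = 0$, together with the fact that the relevant off-diagonal entry is nonzero. Note that symmetry of $A$ is not actually used in this direction — only the zero–nonzero pattern matters — and the free diagonal entries never cause trouble because $x_u = 0$ is already known when row $u$ is used.
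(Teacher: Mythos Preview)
The paper does not actually supply its own proof of this proposition; it is stated as a cited result from \cite{AIM} and used as a black box. Your argument is correct and is precisely the standard proof from that reference: show that any null vector of $A \in S(G)$ supported off $Z$ must be zero by propagating along a fixed forcing sequence, then conclude $\dim(\ker A) \le |Z|$ via the injective restriction map. Your remarks about needing a fixed ordering of forces and about the diagonal entries being harmless are accurate, so there is nothing to correct.
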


\noindent Thus, if $Z(G)$ a minimal zero forcing set of a graph $G$, then Proposition \ref{minrank} states \\ $n - mr(G) \leq Z(G)$.

\section{Zero Forcing of Hierarchical Products}
\label{specialproducts}
In this section, we consider the zero forcing number of hierarchical products of paths, cycles, and complete graphs. The vertices of these graphs will be labeled as in Section \ref{classes}. Figure \ref{pathbypath} demonstrates this labeling for $G = P_4(U)\sqcap P_3$ with $U=\{1,3\}$. 

\subsection{Path by Path}
The first hierarchical product considered is the product of two paths, $P_w(U) \sqcap P_h$, with various root sets, $U$. 

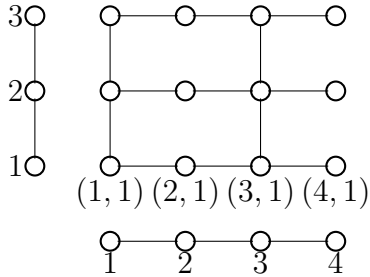
\begin{figure} [H]
\begin{center}
    \begin{tikzpicture}
        \vertexdef
        
         \filldraw[color=black, fill=white, thick](1,0) circle (0.125) node[anchor=north] {$1$}; 
         \filldraw[color=black, fill=white, thick](2,0) circle (0.125) node[anchor=north] {$2$};  
         \filldraw[color=black, fill=white, thick](3,0) circle (0.125) node[anchor=north] {$3$};
        \filldraw[color=black, fill=white, thick](4,0) circle (0.125) node[anchor=north] {$4$};
         
         \draw (1.1,0) -- (1.9,0);
         \draw (2.1,0) -- (2.9,0);
         \draw (3.1,0) -- (3.9,0);
        
             \filldraw[color=black, fill=white, thick](0,1) circle (0.125) node[anchor=east] {$1$};
             \filldraw[color=black, fill=white, thick](0,2) circle (0.125) node[anchor=east] {$2$};
             \filldraw[color=black, fill=white, thick](0,3) circle (0.125) node[anchor=east] {$3$};
           
           \draw (0,1.1) -- (0,1.9);
           \draw (0,2.1) -- (0,2.9);
           
            \filldraw[color=black, fill=white, thick](1,1) circle (0.125) node[anchor=north] {$(1,1)$};
            \filldraw[color=black, fill=white, thick](2,1) circle (0.125) node[anchor=north] {$(2,1)$};
            \filldraw[color=black, fill=white, thick](3,1) circle (0.125) node[anchor=north] {$(3,1)$};
            \filldraw[color=black, fill=white, thick](4,1) circle (0.125) node[anchor=north] {$(4,1)$};
            \filldraw[color=black, fill=white, thick](4,2) circle (0.125);
            \filldraw[color=black, fill=white, thick](4,3) circle (0.125);
            
            \draw (1.1,1) -- (1.9,1);
            \draw (2.1,1) -- (2.9,1);
            \draw (1,1.1) -- (1,1.9);
           \draw (1,2.1) -- (1,2.9);
            
             \filldraw[color=black, fill=white, thick](1,2) circle (0.125);
             \filldraw[color=black, fill=white, thick](2,2) circle (0.125);
            \filldraw[color=black, fill=white, thick](3,2) circle (0.125);
            
            \draw (1.1,2) -- (1.9,2);
            \draw (2.1,2) -- (2.9,2);
            \draw (3,1.1) -- (3,1.9);
           \draw (3,2.1) -- (3,2.9);
            
          \filldraw[color=black, fill=white, thick](1,3) circle (0.125);
          \filldraw[color=black, fill=white, thick](2,3) circle (0.125);
          \filldraw[color=black, fill=white, thick](3,3) circle (0.125);
           
           \draw (1.1,3) -- (1.9,3);
            \draw (2.1,3) -- (2.9,3);
            \draw (3.1,1) -- (3.9,1);
            \draw (3.1,2) -- (3.9,2);
            \draw (3.1,3) -- (3.9,3);

    \end{tikzpicture}
    \caption{$G = P_4(U) \sqcap P_3$ with $U=\{1,3\}$}
    \label{pathbypath}
\end{center}
\end{figure}

\subsubsection{Root Set of Cardinality 1}
Consider $P_w(U) \sqcap P_h$ with $|U|=1$.

\begin{thm}
\label{pathsize1}
Let $G=P_w(U) \sqcap P_h.$ 
\begin{enumerate}
    \item If $U=\{1\}$ or $U=\{w\}$, then $Z(G) \leq \ceil[\big]{\frac{h}{2}}$.
    \item If $U=\{i\}$ and $i \neq 1,w$, then $Z(G) \leq h$.
\end{enumerate}
\end{thm}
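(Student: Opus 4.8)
The plan is to prove both inequalities by exhibiting an explicit zero forcing set of the claimed size and then verifying, via the color change rule, that it color-derives $G$. Write each vertex of $G=P_w(U)\sqcap P_h$ as $(a,b)$ with $a\in\{1,\dots,w\}$ and $b\in\{1,\dots,h\}$; every \emph{row} $\{(1,b),\dots,(w,b)\}$ induces a copy of $P_w$, and when $U=\{i\}$ the only remaining edges are those of the \emph{spine} path $(i,1)-(i,2)-\cdots-(i,h)$ in column $i$. The elementary fact used repeatedly is this: if the rightmost vertex $(w,b)$ of a row is filled and $w\notin U$, then $(w,b)$ has degree one, so it forces $(w-1,b)$, which forces $(w-2,b)$, and so on, so a single filled row-endpoint cascades across its entire row unless and until it reaches the spine.

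For Part 1, by the isomorphism obtained from relabelling the vertices of $P_w$ via $k\mapsto w+1-k$ (which the earlier discussion notes preserves $Z$) it suffices to treat $U=\{1\}$, and I will take $S=\{(w,b) : b \text{ odd},\ 1\le b\le h\}$, so $|S|=\lceil h/2\rceil$. First, each $(w,b)\in S$ cascades across its (odd) row, so every odd row is filled. Then I process the even rows from top to bottom: $(1,1)$ is filled with unique unfilled neighbor $(1,2)$ (there is no $(1,0)$), so it forces $(1,2)$, which cascades across row $2$; now $(1,3)$ is filled with unique unfilled neighbor $(1,4)$, so it forces it and row $4$ cascades; and in general, once row $b-2$ is filled the spine vertex $(1,b-1)$ has $(1,b)$ as its only unfilled neighbor and forces it. This fills rows $2,4,6,\dots$ — including row $h$ when $h$ is even — so $G$ is color-derived and $Z(G)\le\lceil h/2\rceil$.

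For Part 2, with $U=\{i\}$ and $1<i<w$, I will take $S=\{(w,b) : 1\le b\le h\}$, of size $h$. The reason every row gets an endpoint, rather than every other row, is that a lone horizontal cascade stalls at the spine: a spine vertex $(i,b)$ has degree three (degree two at $b=1,h$), so when reached only from the right it still has two unfilled neighbors. Filling all of $S$ avoids this: every row cascades from the right down to and including its spine vertex — the step $(i+1,b)\to(i,b)$ is legal because $i+1\notin U$ — so after this stage the whole spine $\{(i,b):1\le b\le h\}$ is filled; then for each $b$ the vertex $(i,b)$ has its spine neighbors and $(i+1,b)$ all filled, leaving $(i-1,b)$ as its unique unfilled neighbor, which it forces, whereupon $(i-1,b)\to(i-2,b)\to\cdots\to(1,b)$ completes row $b$. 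Hence $Z(G)\le h$. The one point requiring care is the order of forces — the spine must be completed before pushing to its left — and the degenerate cases are immediate: $1<i<w$ forces $w\ge 3$, and $h=1$ gives $G\cong P_w$ with $Z=1$.
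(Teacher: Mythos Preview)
Your proof is correct and follows essentially the same strategy as the paper's: fill every other row via degree-one row-endpoints, then use the spine column to propagate into the remaining rows (Case~1), and fill one full column of row-endpoints so the cascade passes through the spine (Case~2). Your choice of the single set $S=\{(w,b):b\text{ odd}\}$ in Case~1 is slightly cleaner than the paper's, which uses two adjacent seed rows and splits into separate even/odd-$h$ subcases, but the underlying mechanism is the same.
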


\begin{proof} 
\textbf{Case 1:} \\
Assume $h$ is even and $U=\{w\}$. Note when $U=\{1\}$, these graphs are isomorphic, and the results will follow similarly. Let
             \begin{center}
                  $S=\{(1,2),(1,3),(1,5),(1,7)...,(1,h-1)\}$.
             \end{center}
That is, $S$ consists of the vertices in the odd rows of column $1$ starting with row $3$ as well as the vertex $(1,2)$. We will show that $S$ is a zero forcing set. Note that all of these vertices have degree 1 and the remaining vertices in these rows are paths, so they are filled in order from left to right across each row. 

\begin{figure}[H]
\centering
    \begin{tikzpicture}
             \vertexdef
             
             \filldraw[color=black, fill=white, thick](-2,-1.5) circle (0.1) node[anchor=north] {$1$} node[anchor=east] {$1$};
           \filldraw[color=black, fill=white, thick](-1,-1.5) circle (0.1) node[anchor=north] {$2$};
           \filldraw[color=black, fill=white, thick](0,-1.5) circle (0.1) node[anchor=north] {$w-1$};
           \filldraw[color=black, fill=white, thick](1,-1.5) circle (0.1) node[anchor=north] {$w$};
           
            \draw (-1.9,-1.5) -- (-1.1,-1.5); \draw (0.1,-1.5) -- (0.9,-1.5); 
            
            \filldraw[black] (-1.5,-0.3) circle (0.5pt); \filldraw[black] (-1.5,0) circle (0.5pt); \filldraw[black] (-1.5,0.3) circle (0.5pt);
           \filldraw[black] (0.5,-0.3) circle (0.5pt); \filldraw[black] (0.5,0) circle (0.5pt); \filldraw[black] (0.5,0.3) circle (0.5pt);
           \filldraw[black] (-0.5,-0.3) circle (0.5pt); \filldraw[black] (-0.5,0) circle (0.5pt); \filldraw[black] (-0.5,0.3) circle (0.5pt);
           
           \filldraw[black] (-0.7,1.5) circle (0.5pt); \filldraw[black] (-0.5,1.5) circle (0.5pt); \filldraw[black] (-0.3,1.5) circle (0.5pt);
           \filldraw[black] (-0.7,1) circle (0.5pt); \filldraw[black] (-0.5,1) circle (0.5pt); \filldraw[black] (-0.3,1) circle (0.5pt);
           \filldraw[black] (-0.7,0.5) circle (0.5pt); \filldraw[black] (-0.5,0.5) circle (0.5pt); \filldraw[black] (-0.3,0.5) circle (0.5pt);
           \filldraw[black] (-0.7,-0.5) circle (0.5pt); \filldraw[black] (-0.5,-0.5) circle (0.5pt); \filldraw[black] (-0.3,-0.5) circle (0.5pt);
           \filldraw[black] (-0.7,-1) circle (0.5pt); \filldraw[black] (-0.5,-1) circle (0.5pt); \filldraw[black] (-0.3,-1) circle (0.5pt);
           \filldraw[black] (-0.7,-1.5) circle (0.5pt); \filldraw[black] (-0.5,-1.5) circle (0.5pt); \filldraw[black] (-0.3,-1.5) circle (0.5pt);
            
           \filldraw[color=black, fill=blue, thick](-2,-0.5) circle (0.1) node[anchor=east] {$3$};
           \filldraw[color=black, fill=blue, thick](-1,-0.5) circle (0.1);
           \filldraw[color=black, fill=blue, thick](0,-0.5) circle (0.1);
           \filldraw[color=black, fill=blue, thick](1,-0.5) circle (0.1);
           
            \draw (-1.9,-0.5) -- (-1.1,-0.5); \draw (0.1,-0.5) -- (0.9,-0.5); 
            
           \filldraw[color=black, fill=blue, thick](-2,-1) circle (0.1) node[anchor=east] {$2$};
           \filldraw[color=black, fill=blue, thick](-1,-1) circle (0.1);
           \filldraw[color=black, fill=blue, thick](0,-1) circle (0.1);
           \filldraw[color=black, fill=blue, thick](1,-1) circle (0.1);
           
            \draw (-1.9,-1) -- (-1.1,-1); \draw (0.1,-1) -- (0.9,-1);
            
            \filldraw[color=black, fill=blue, thick](-2,1) circle (0.1) node[anchor=east] {$h-1$};
           \filldraw[color=black, fill=blue, thick](-1,1) circle (0.1);
           \filldraw[color=black, fill=blue, thick](0,1) circle (0.1);
           \filldraw[color=black, fill=blue, thick](1,1) circle (0.1);
           
            \draw (-1.9,1) -- (-1.1,1); \draw (0.1,1) -- (0.9,1); 
            
            \filldraw[color=black, fill=white, thick](-2,0.5) circle (0.1) node[anchor=east] {$h-2$};
           \filldraw[color=black, fill=white, thick](-1,0.5) circle (0.1);
           \filldraw[color=black, fill=white, thick](0,0.5) circle (0.1);
           \filldraw[color=black, fill=white, thick](1,0.5) circle (0.1);
           
            \draw (-1.9,0.5) -- (-1.1,0.5); \draw (0.1,0.5) -- (0.9,0.5);
            
            \filldraw[color=black, fill=white, thick](-2,1.5) circle (0.1) node[anchor=east] {$h$};
           \filldraw[color=black, fill=white, thick](-1,1.5) circle (0.1);
           \filldraw[color=black, fill=white, thick](0,1.5) circle (0.1);
           \filldraw[color=black, fill=white, thick](1,1.5) circle (0.1);
           
            \draw (-1.9,1.5) -- (-1.1,1.5); \draw (0.1,1.5) -- (0.9,1.5); 
            
            \draw (1,-1.4) -- (1,-1.1);\draw (1,-0.9) -- (1,-0.6);\draw (1,-0.4) -- (1,-0.2);\draw (1,0.6) -- (1,0.9);\draw (1,1.1) -- (1,1.4);

    \end{tikzpicture}
    \caption{$G=P_w(U)\sqcap P_h$ with $U = \{w\}$}
    \label{pathwfilling}

\end{figure}
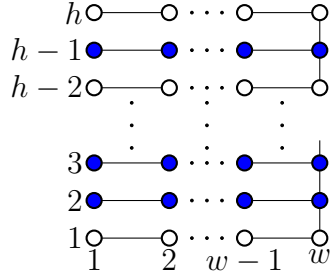

That is, vertices $(w,2)$ and $(w,3)$ are adjacent filled vertices each with degree 3. At this point, as depicted in Figure \ref{pathwfilling}, each row with a vertex in $S$ is filled. That is, row $i$ when $i$ is odd, and row $2$ are entirely filled. Filling now proceeds vertically to the remaining vertices in column $w$. Note that vertices $(w,2)$ and $(w,3)$ are adjacent filled vertices each with degree $3$. Since row $2$ and row $3$ are filled, these vertices each have one unfilled neighbor, $(w,1)$ and $(w,4)$, respectively. Thus, $(w,2)$ and $(w,3)$ force $(w,1)$ and $(w,4)$ to be filled simultaneously. 
Since the remaining vertices in rows $1$ and $4$ form paths, they are filled across the rows in reverse order from right to left. 
Recall that all of the vertices in odd rows are filled, so now, $(w,5)$ has one unfilled neighbor, $(w,6)$, so it is filled. Following the same reasoning as row $4$ and row $6$ is filled. This process continues until $(w,h-1)$ fills $(w,h)$. Note, $deg(w,h)=2$, with $(w,h-1)$ as its filled neighbor. Therefore, $(w,h)$ fills $(w-1,h)$. Then the rest of row $h$ is filled because the vertices form a path. Thus, the whole graph is filled and $Z(G) \leq {\frac{h}{2}}$.

Assume $h$ is odd and $U=\{w\}$. Let
        \begin{center}
            $S=\{(1,1), (1,2), (1,4),(1,6)...,(1,h-1)\}$.
        \end{center}
That is, $S$ consists of the vertices in the even rows of column $1$ as well as the vertex $(1,1)$. We will show that $S$ is a zero forcing set. Note that the vertices in $S$ all have degree 1 and the remaining vertices in these rows are paths, so they are filled across each row in order from left to right. Now, all of the rows that have a vertex in $S$ are filled. Note, $deg(w,2)=3$ with its only unfilled neighbor being $(w,3)$, so $(w,3)$ is filled by $(w,2)$. The remaining vertices in row 3 form a path, so they are filled in reverse order from right to left. Recall that all even rows are filled. Therefore, by the same reasoning as row $3$, row $5$ is filled and so on through row $h-2$. Then, $(w,h-1)$ can fill $(w,h)$. Note, $deg(w,h)=2$, with $(w-1,h)$ unfilled. Therefore, $(w,h)$ fills $(w-1,h)$. Since the remaining vertices in row $h$ form a path, they are filled in reverse order from right to left. Thus, the whole graph is filled and $Z(G) \leq {\frac{h+1}{2}}$.
        
        Thus, if $U=\{1\}$ or $U=\{w\}$, then $Z(G) \leq \ceil[\big]{\frac{h}{2}}$.

\textbf{Case 2:} \\
Assume $U=\{i\}$ and $i \neq 1,w$. Let
    \begin{center}
         $S=\{(1,1),(1,2),...,(1,h)\}$.
    \end{center}
    We will show that $S$ is a zero forcing set. All of these vertices have degree 1, so each vertex in the initial set fills their neighbor in column $2$. Therefore, all of column 2 is filled. Since the remaining vertices in each row form a path until column $i$, they are all filled in order from left to right. Thus, vertices in columns $1,2,...,i$ are all filled. The vertex $(i,j)$ for $j=1,...,h$ has one unfilled neighbor $(i+1,j)$. Hence, $(i,j)$ fills $(i+1,j)$ resulting in column $i+1$ begin entirely filled. The remaining vertices are filled in order from left to right across each row since the vertices are in paths. Thus, the whole graph is filled and $Z(G) \leq h$.

\end{proof}

For $G=P_w(U) \sqcap P_h$ with $U=\{i\}$, the minimum rank of the associated matrices of these graphs for some particular factors were found. The minimum rank of the associated matrices provides a lower bound on $Z(G)$. In some cases, this lower bound equals the upper bound on $Z(G)$ found with Theorem \ref{pathsize1}. Thus, in these cases $Z(G)$ exactly is found.


\begin{figure} [H]
\begin{center}
    \begin{tikzpicture}
        \vertexdef
        
    \node[vertex] (1) at (0,1.5){1};
    \node[vertex] (2) at (1,1.5){2};
    \node[vertex] (3) at (0,0.5){3};
    \node[vertex] (4) at (1,0.5){4};
    \node[vertex] (5) at (0,-0.5){5};
    \node[vertex] (6) at (1,-0.5){6};
    \node[vertex] (7) at (0,-1.5){n-3};
    \node[vertex] (8) at (1,-1.5){n-2};
    \node[vertex] (9) at (0,-2.5){n-1};
    \node[vertex] (10) at (1,-2.5){n};
    
    \draw (1) -- (2);
    \draw (1) -- (3);
    \draw (3) -- (4);   
    \draw (3) -- (5); \draw (5) -- (6); \draw (5) -- (0,-0.9);  \draw (7) -- (8); \draw (7) --(9); \draw (9) -- (10);
    \filldraw[black] (0,-0.9) circle (0.5pt); \filldraw[black] (0,-1) circle (0.5pt); \filldraw[black] (0,-1.1) circle (0.5pt) {};
    \filldraw[black] (1,-0.9) circle (0.5pt); \filldraw[black] (1,-1) circle (0.5pt); \filldraw[black] (1,-1.1) circle (0.5pt) {};

    \end{tikzpicture}
    \caption{$G = P_2(U)\sqcap P_h$ with $U=\{1\}$}
    \label{path2}
\end{center}
\end{figure}
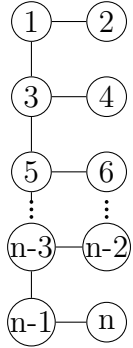

Figure \ref{path2} depicts $G=P_2(U) \sqcap P_h$ with $U=\{1\}$ of an arbitrary height. When $h=3$, Theorem \ref{pathsize1} states that $Z(G) \leq 2$. An associated matrix with minimum rank is the following matrix $A(G)$.

$$
A(G) = \begin{pmatrix}
-1 & 1 & 1 & 0 & 0 & 0 \\
 1 & -1 & 0 & 0 & 0 & 0 \\
 1 & 0 & 0 & 1 & 1 & 0 \\
 0 & 0 & 1 & 0 & 0 & 0 \\
 0 & 0 & -1 & 0 & -1 & 1 \\
 0 & 0 & 0 & 0 & 1 & -1 \\
\end{pmatrix}
$$

Matrix $A(G)$ has rank $4$. From Proposition \ref{minrank}, $2 = 6-4 \leq Z(G)$. Thus, using both Theorem \ref{pathsize1} and Proposition \ref{minrank}, $Z(G)=2$ when $G=P_2(U) \sqcap P_3$ with $U=\{1\}$.

As $h$ increases, we can uniformly change the associated matrix of the graph to get the desired rank to find equality for $Z(G)$. Consider $G' = P_2(U) \sqcap P_4$ with $U = \{1\}$; that is, $h$ has increased by $1$ from $G$.  When $h$ is increased by $1$, a new row is added to the hierarchical product, and two new vertices are added to the graph. These two vertices will be labeled $7$ and $8$ as in Figure \ref{path2}. In addition, there will be two new columns and two new rows in $A(G')$, which will be the $7^{th}$ and $8^{th}$ columns as well as the $7^{th}$ and $8^{th}$ rows. As a result, the zero and nonzero entries of $A(G)$ and $A(G')$ are in the same places. Let $A(G')$ have the same nonzero entries as $A(G)$. In addition, let the nonzero entries in the $7^{th}$ and $8^{th}$ columns and rows of $A(G')$ be $1$. Let the diagonal entries of row $7$ and row $8$ be $0$.  This associated matrix $A(G')$ is shown below. 
\[
A(G') = \begin{pmatrix}
-1 & 1 & 1 & 0 & 0 & 0 & 0 & 0\\
 1 & -1 & 0 & 0 & 0 & 0 & 0 & 0  \\
 1 & 0 & 0 & 1 & 1 & 0 & 0 & 0\\
 0 & 0 & 1 & 0 & 0 & 0 & 0 & 0 \\
 0 & 0 & 1 & 0 & -1 & 1 & 1 & 0 \\
 0 & 0 & 0 & 0 & 1 & -1 & 0 & 0\\
 0 & 0 & 0 & 0 & 1 & 0 & 0 & 1\\
 0 & 0 & 0 & 0 & 0 & 0 & 1 & 0\\
\end{pmatrix}
\]

Matrix $A(G')$ rank $6$. From Proposition \ref{minrank}, we know $2 = 8-6 \leq Z(G')$. By Theorem \ref{pathsize1}, we also know $Z(G') \leq 2$. Thus, $Z(G')=2$ when $G'=P_2(U) \sqcap P_3$ with $U=\{1\}$.

We can generalize this construction to all values of $h$. Consider $G = P_2(U) \sqcap P_{h-1}$ with $U = \{1\}$ and $G' = P_2(U) \sqcap P_{h}$ with $U = \{1\}$. When $h$ is increased by $1$, a new row is added to the hierarchical product. These two new vertices will be labeled $n- 1$ and $n$ as in Figure \ref{path2}. This increase in the number of vertices in the graph results in two new columns and two new rows in $A(G')$, which will be column $(n-1)$ and column $n$ as well as row $(n-1)$ and row $n$. As a result, the zero and nonzero entries of $A(G)$ and $A(G')$ are in the same place, and we will let $A(G')$ have the same nonzero entries as $A(G)$. Also, let the nonzero entries in the new columns and rows of $A(G')$ be $1$. If $h$ is even, let the diagonal entries row $n-1$ and row $n$ be $0$ . If $h$ is odd, let the diagonal entries row $n-1$ and row $n$ be $-1$. This constructed associated matrix $A(G')$ will have minimum rank.

\subsubsection{Root Set of Cardinality 2}
Consider $P_w(U) \sqcap P_h$ with $|U|=2$.
\begin{thm}
Let $G=P_w(U) \sqcap P_h$.
\begin{enumerate}
    \item If $U=\{1,2\}$ or $U=\{w-1,w\}$, then $Z(G) \leq \ceil[\big]{\frac{h}{2}}$.
    \item If $U=\{i,j\}$ with $i \neq 1$, $j \neq w$, and $j \neq i+1$, then $Z(G) \leq h$.
    \end{enumerate}
\end{thm}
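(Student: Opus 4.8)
The plan is to follow the template of Theorem~\ref{pathsize1}: in each case I produce an explicit zero forcing set $S$ of the stated size and run the color change rule on it. I will assume $w\ge 3$ (if $w=2$ then $U=\{1,2\}=\{w-1,w\}$ and $G=P_2\square P_h$ is a Cartesian product, whose zero forcing number is already known), and for Case~1 also $h\ge 3$.

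\textbf{Case 1.} The reflection $i\mapsto w+1-i$ of $P_w$ is an automorphism taking $\{1,2\}$ to $\{w-1,w\}$, so, exactly as with $U=\{1\}$ and $U=\{w\}$ in Theorem~\ref{pathsize1}, the two products are isomorphic and it suffices to treat $U=\{w-1,w\}$. Let
\[
S=\{(1,1),(1,2)\}\cup\{(1,j):j\text{ even},\ 4\le j\le h-1\};
\]
a count shows $|S|=\lceil h/2\rceil$ for both parities of $h$. Call rows $1,2$ together with the even rows up to $h-1$ the \emph{seed rows}. Since columns $1,\dots,w-2$ have no vertical edges, every vertex of $S$ has degree $1$, and along each seed row the vertices in columns $1,\dots,w-1$ form a path forced from left to right, the last force being $(w-2,j)\to(w-1,j)$; so after this first phase columns $1,\dots,w-1$ are filled in every seed row. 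Columns $w-1$ and $w$ now form a $P_2\square P_h$ ladder. Because rows $1$ and $2$ are seed rows, $(w-1,1)$ has $(w,1)$ as its only unfilled neighbor and forces it, after which the ladder propagates down: once the ladder is complete through a seed row $2k$, the filled pair $(w-1,2k),(w,2k)$ forces $(w-1,2k+1),(w,2k+1)$, then forces $(w-1,2k+2),(w,2k+2)$ (the next seed row), and at that point $(w-1,2k+1)$ has the single unfilled neighbor $(w-2,2k+1)$, so the gap row $2k+1$ is forced leftward along its path. The bottom is handled separately by parity: if $h$ is odd the one remaining row $h$ is filled by $(w,h-1)\to(w,h)\to(w-1,h)$ and then leftward; if $h$ is even the ladder is carried all the way to $(w,h)$, then $(w,h)\to(w-1,h)$, and the two remaining rows $h-1,h$ are filled leftward. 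Thus $S$ is a zero forcing set and $Z(G)\le\lceil h/2\rceil$.

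\textbf{Case 2.} Since $i\ne 1$ and $j\ne w$, neither end column is a root, so $1\notin U$; take $S=\{(1,k):1\le k\le h\}$, a set of $h$ degree-$1$ vertices. Each $(1,k)$ forces $(2,k)$, so column $2$ fills; inductively, when column $c$ is full each $(c,k)$ has the unique unfilled neighbor $(c+1,k)$ (its neighbors $(c-1,k)$, and $(c,k\pm 1)$ if $c\in U$, are filled), so it forces $(c+1,k)$ and column $c+1$ fills. Sweeping through column $w$ fills $G$, so $Z(G)\le h$; the hypotheses $j\ne i+1$ and on the positions of $i,j$ are not needed here and only serve to separate this case from Case~1 and from the other cardinality-$2$ configurations.

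I expect the downward cascade in Case~1 to be the one step needing care: one must check that every force along the way has exactly one unfilled neighbor, which depends on the alternation of seed and gap rows --- so that a ladder vertex sitting in a gap row still ``sees'' its leftward neighbor as unfilled precisely until the ladder has been completed one row further down --- and on the parity-dependent endgame in rows $h-1$ and $h$. The isomorphism reduction and Case~2 are routine.
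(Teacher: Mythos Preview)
Your proposal is correct and uses exactly the same zero forcing sets and the same forcing strategy as the paper's proof. Your Case~2 is in fact cleaner than the paper's version---a single column-by-column induction in place of four separate subcases---and your observation that the hypotheses $j\neq i+1$ and $j\neq w$ are not actually needed for the bound $Z(G)\le h$ (only $1\notin U$ is used) is correct.
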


\begin{proof}
\textbf{Case 1:}\\
Assume $h$ is even and $U=\{w-1,w\}$. Note, the case in which  $U=\{1,2\}$, is similar since the graphs are isomorphic. Let

    \begin{center}
        $S=\{(1,1), (1,2), (1,4), (1,6),...,(1,h-2)\}.$
    \end{center}
                
 That is, $S$ consists of the vertices in the even rows of column 1 except row $h$ as well as the vertex $(1,1)$.  We will show that $S$ is a zero forcing set. Note that all of these vertices have degree 1 and the remaining vertices in these rows through column $w-1$ are paths, so they are filled in order from left to right across each row. Now, all of the rows that have a vertex in the initial set are filled in columns $1,...,w-1$.
                
Note, $deg(w-1,1)=3$ and its only unfilled neighbor is $(w,1)$. Hence, $(w-1,1)$ fills $(w,1)$. The degree of $(w,1)$ is 2 and $(w,2)$ is the only unfilled neighbor. Hence, $(w,1)$ fills $(w,2)$. Note, $deg(w-1,2)=4$ and $deg(w,2)=3$ and each has one unfilled neighbor. Therefore, $(w-1,3)$ and $(w,3)$ are filled simultaneously by $(w-1,2)$ and $(w,2)$ respectively. Further, $deg(w-1,3)=4$ and it has only one unfilled neighbor, $(w-2,3)$. So, $(w-2,3)$ is filled by $(w-1,3)$. The remainder of vertices in this row are filled in reverse order because they are in a path. Now, rows $1,2,3$ are filled and row $4$ is filled for vertices $(1,4)$ to $(w-1,4)$. Thus, vertex $(w,3)$ which has degree 3, can fill $(w,4)$. Rows $5$ and $6$ are filled similarly to $3$ and $4$ and so on until all the rows through $h-2$ are filled. 
                
Finally, we consider rows $h-1$ and $h$. Vertices $(w-1,h-2)$ and $(w,h-2)$ fill $(w-1,h-1)$ and $(w,h-1)$ respectively. Note, $deg(w-1,h-1)=4$ but it currently has 2 unfilled neighbors and $deg(w,h-1)=3$ and has only one unfilled neighbor. So, $(w,h)$ is filled. The degree of $g(w,h)$ is 2 and it can fill $(w-1,h)$. Now $(w-1,h)$ and $(w-1,h-1)$ both have only one unfilled neighbor. So, $(w-2,h)$ and $(w-2,h-1)$ are filled by the respective vertex. Since the remaining vertices in these two rows are paths, they fill across the row in reverse order. Thus, the whole graph is filled and $Z(G) \leq {\frac{h}{2}}$.
         
Assume $h$ is odd and $U=\{w-1,w\}$. Let      
     \begin{center}
            $S=\{(1,1),(1,2),(1,4), (1,6),...,(1,h-1)\}.$   
     \end{center}
 That is, $S$ consists of the vertices in the even rows of column 1 except row $h$ as well as the vertex $(1,1)$.  We will show that $S$ is a zero forcing set. The first part of the forcing process follows the same reasoning as in case $1$. In which, we demonstrated that the set fills the rows until the second to last even row. In the odd case, the reasoning applies to the last even row, $h-1$. So, rows $1,...,h-1$ are filled.
                
Finally, we consider row $h$. Note, $deg(w-1,h-1)=4$ and $deg(w,h-1)=3$. Both vertices have one unfilled neighbor. So, $(w-1,h)$ and $(w,h)$ are filled respectively. Since the remaining vertices in row $h$ are paths, they fill across in reverse order. Thus, the whole graph is filled and $S$ is a zero forcing set and $Z(G) \leq {\frac{h+1}{2}}$.
                
Thus, if $U=\{1,2\}$ or $U=\{w-1,w\}$, then $Z(G) \leq \ceil[\big]{\frac{h}{2}}$.

\textbf{Case 2:}\\ 
Assume $U=\{i,j\}$ such that $i \neq 1$, $j \neq w$, and $j \neq i+1$ and let
 \begin{center}
         $S=\{(1,1),(1,2),...,(1,h)\}.$
    \end{center}
We will show that $S$ is a zero forcing set. Note that all of these vertices have degree 1 and the remaining vertices in these rows through column $i$ are a path, so they are filled in order from left to right. The vertices in columns $1,...,i$ are filled. The vertex $(i,k)$ for $k=1,...,h$ has one unfilled neighbor $(i+1,k)$.
Hence $(i,k)$ fills $(i+1,k)$. Since the remaining vertices across the rows in columns $i+2,...,j$ are paths, they are filled across in order from left to right. Columns $1,...,j$ are filled. The vertex $(j,k)$ has one unfilled neighbor $(j+1,k)$. Hence $(j,k)$ fills $(j+1,k)$. Since the remaining vertices in columns $j+2,..,w$ are paths, they are filled across in order from left to right. Thus, the whole graph is filled and $Z(G) \leq h$.
            
Now, assume $i \neq 1$, $j \neq w$, and $j=i+1$. We will show that $S$ is a zero forcing set. The first part of the forcing processing follows the same reasoning as the first condition for $i$ and $j$. Hence, the vertices in columns $1,...,i$ are filled. The vertex $(i,k)$ for $k=1,...,h$ has one unfilled neighbor $(j,k)$. Hence $(i,k)$ fills $(j,k)$. The vertex $(j,k)$ has one unfilled neighbor $(j+1,k)$. Hence $(j,k)$ fills $(j+1,k)$. Since the remaining vertices in columns $j+2,..,w$ are paths, they are filled across in order from left to right. Thus, the whole graph is filled and $Z(G) \leq h$.

Next, assume $i \neq 1$ and $j=w$. (Note, suppose $U=\{m,n\}$ such that $m=1$ and $n \neq w$. This case is isomorphic with the first assumption if $|i-j|=|m-n|$.) We will show that $S$ is a zero forcing set. The first part of the forcing processing follows the same reasoning as the first condition for $i$ and $j$. Hence, the vertices in columns $1,...,i$ are filled. The vertex $(i,k)$ for $k=1,...,h$ has one unfilled neighbor $(i+1,k)$. Hence $(i,k)$ fills $(i+1,k)$. Since the remaining vertices across the rows in columns $i+2,...,w$ are paths, so they are filled across in order from left to right. Thus, the whole graph is filled and $Z(G) \leq h$.
            
Assume $i=1$ and $j=w$. We will show that $S$ is a zero forcing set. The vertices $(1,1)$ and $(1,h)$ have a degree 2 and the rest of the vertices in $S$ have degree 3. All of the vertices have 1 unfilled neighbor $(2,k)$ for $k=1,...,h$. Hence, $(1,k)$ fills $(2,k)$ and all of column $2$ is filled. Since the remaining vertices across the rows in columns $3,..,w$ are paths, they are filled across in order from left to right. Columns $1,...,w$ are filled. Thus, the whole graph is filled and $Z(G) \leq h$.

\end{proof}

\subsection{Cycle by Cycle}
Next, we consider the hierarchical product of two cycles. Since each vertex of a cycle has degree $2$, we do not have to consider the separate cases when vertices of degree $1$ are in the root set as is the case with the path by path hierarchical products. Fore example, Figure \ref{cycles} is a hierarchical product of two cycles when the root set consists of vertices $1$ and $2$. The zero forcing number is the same if the root set consists of any two adjacent vertices since the graphs will be isomorphic. Thus, what is important is whether or not vertices in the root set are adjacent. 
    
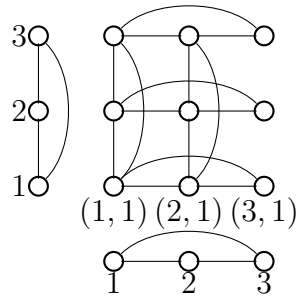
\begin{figure} [H]
\begin{center}
    \begin{tikzpicture}
        \vertexdef
            \filldraw[color=black, fill=white, thick](1,0) circle (0.125) node[anchor=north] {$1$}; 
         \filldraw[color=black, fill=white, thick](2,0) circle (0.125) node[anchor=north] {$2$};  
         \filldraw[color=black, fill=white, thick](3,0) circle (0.125) node[anchor=north] {$3$};
         
         \draw (1.1,0.1)..controls (1.5,0.5) and (2.5,0.5)..(2.9,0.1);
         \draw (1.1,0) -- (1.9,0);
         \draw (2.1,0) -- (2.9,0);
        
             \filldraw[color=black, fill=white, thick](0,1) circle (0.125) node[anchor=east] {$1$};
             \filldraw[color=black, fill=white, thick](0,2) circle (0.125) node[anchor=east] {$2$};
             \filldraw[color=black, fill=white, thick](0,3) circle (0.125) node[anchor=east] {$3$};
           
           \draw (1.1,1.1)..controls (1.5,1.5) and (2.5,1.5)..(2.9,1.1);
           \draw (0,1.1) -- (0,1.9);
           \draw (0,2.1) -- (0,2.9);
           
            \filldraw[color=black, fill=white, thick](1,1) circle (0.125) node[anchor=north] {$(1,1)$};
            \filldraw[color=black, fill=white, thick](2,1) circle (0.125) node[anchor=north] {$(2,1)$};
            \filldraw[color=black, fill=white, thick](3,1) circle (0.125) node[anchor=north] {$(3,1)$};
            
            \draw (1.1,2.1)..controls (1.5,2.5) and (2.5,2.5)..(2.9,2.1);
            \draw (1.1,1) -- (1.9,1);
            \draw (2.1,1) -- (2.9,1);
            \draw (1,1.1) -- (1,1.9);
           \draw (1,2.1) -- (1,2.9);
            
             \filldraw[color=black, fill=white, thick](1,2) circle (0.125) {};
             \filldraw[color=black, fill=white, thick](2,2) circle (0.125) {};
            \filldraw[color=black, fill=white, thick](3,2) circle (0.125) {};
            \draw (1.1,3.1)..controls (1.5,3.5) and (2.5,3.5)..(2.9,3.1);
            \draw (1.1,2) -- (1.9,2);
            \draw (2.1,2) -- (2.9,2);
            \draw (2,1.1) -- (2,1.9);
           \draw (2,2.1) -- (2,2.9);
            
          \filldraw[color=black, fill=white, thick](1,3) circle (0.125) {};
          \filldraw[color=black, fill=white, thick](2,3) circle (0.125) {};
          \filldraw[color=black, fill=white, thick](3,3) circle (0.125) {};
           
           \draw (1.1,3) -- (1.9,3);
            \draw (2.1,3) -- (2.9,3);
          
            \draw (0.1,1.1)..controls (0.5,1.5) and (0.5,2.5)..(0.1,2.9);
            \draw (1.1,1.1)..controls(1.5,1.5)and (1.5,2.5)..(1.1,2.9);
            \draw (2.1,1.1)..controls (2.5,1.5) and (2.5,2.5)..(2.1,2.9);

    \end{tikzpicture}
    \caption{$G = C_3(U)\sqcap C_3$ with $U=\{1,2\} $}
    \label{cycles}
\end{center}
\end{figure}

\subsubsection{Root Set of Cardinality 1}
Consider $G=C_w (U) \sqcap C_h$ with $|U| = 1$.

 \begin{thm}
    {Let $G=C_w (U) \sqcap C_h$. If U=\{i\} and $h,w\geq4$, then $Z(G)\leq h+2$}.
    \end{thm}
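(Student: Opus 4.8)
The plan is to exhibit an explicit zero forcing set of size $h+2$. First I would reduce to the case $i=1$: since $C_w$ is vertex-transitive, any automorphism $\varphi$ of $C_w$ sending $i$ to $1$ induces, via $(x,y)\mapsto(\varphi(x),y)$, an isomorphism $C_w(\{i\})\sqcap C_h \cong C_w(\{1\})\sqcap C_h$, so the two graphs have the same zero forcing number (this is the same kind of reasoning used for the graphs in Figure~\ref{isograph}). With $i=1$ the graph $G$ has a transparent structure: for each $j\in\{1,\dots,h\}$ the set $\{(1,j),(2,j),\dots,(w,j)\}$ induces a copy of $C_w$ (call it row~$j$), the set $\{(1,1),(1,2),\dots,(1,h)\}$ induces a copy of $C_h$ (column~$1$), these account for all the edges, each vertex $(1,j)$ has degree $4$ with neighbors $(2,j),(w,j),(1,j-1),(1,j+1)$ (second coordinate mod $h$), and every other vertex has degree $2$.

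Next I would take $S=\{(2,j) : 1\le j\le h\}\cup\{(1,1),(1,2)\}$, so $|S|=h+2$, and show $S$ is a zero forcing set by describing the forces in order. The key sub-claim is that once $(1,j)$ and $(2,j)$ are both colored, row~$j$ becomes entirely colored: indeed $(2,j)$ then has the single uncolored neighbor $(3,j)$ and forces it, then $(3,j)$ forces $(4,j)$, and so on around the cycle up to $(w-1,j)$ forcing $(w,j)$ (the degree-$4$ vertex $(1,j)$ is already colored, so it never needs to perform a force). Applying this to $j=1$ and $j=2$ colors rows $1$ and $2$. Then I would run an induction: if rows $1,\dots,k$ are fully colored for some $k\ge 2$, then $(1,k)$ has colored neighbors $(2,k),(w,k)$ (row~$k$ done) and $(1,k-1)$ (row~$k-1$ done), so its only uncolored neighbor is $(1,k+1)$, which it forces; now $(1,k+1)$ and $(2,k+1)\in S$ are colored, so by the sub-claim row~$k+1$ becomes fully colored. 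The induction runs until row~$h$ is colored, at which point every vertex of $G$ is colored, giving $Z(G)\le|S|=h+2$.

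The conceptual point — and the only place care is needed — is that coloring all of column~$1$ by itself does not propagate into the rows, since each $(1,j)$ is a degree-$4$ vertex that stays stuck with two uncolored row-neighbors; the remedy is to seed every row with the extra vertex $(2,j)$ and to seed two consecutive column vertices $(1,1),(1,2)$ so that the column cycle can be traversed in a single direction. I expect the main obstacle in writing the proof to be purely bookkeeping: verifying at each stage that the forcing vertex really has exactly one uncolored neighbor (in particular handling the degree-$4$ vertices $(1,j)$ and confirming that the induction's ordering is self-consistent), which is routine but must be stated carefully. The hypotheses $h,w\ge 4$ are comfortably more than the argument requires; it in fact goes through for $h,w\ge 3$.
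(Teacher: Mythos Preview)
Your proposal is correct and follows essentially the same approach as the paper: the same reduction to $U=\{1\}$, the same zero forcing set $S=\{(1,1),(1,2)\}\cup\{(2,j):1\le j\le h\}$, and the same forcing order (fill rows $1$ and $2$ from $(2,1),(2,2)$ outward, then use $(1,k)$ to push down column~$1$ one step at a time, filling each new row via the sub-claim). Your write-up is in fact more explicit than the paper's about why each forcing vertex has exactly one uncolored neighbor.
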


\begin{proof}Without loss of generality, let $U= \{1\}$. Let
\begin{center}
 $S=$ \{$(1,1),(1,2),(2,1),(2,2),(2,3),...,(2,h)$\}.
\end{center}
That is, $S$ consists of the vertices in column 2 as well as vertices (1,1) and (1,2). Note row 1 is a cycle with two adjacent vertices filled, (1,1) and (2,1). Also, vertex (1,1) cannot fill any vertices in the cycle; however, the vertex (2,1) can fill row 1. Similarly, (2,2) can fill row 2. 
Next, vertex $(1,2)$ has degree 4 and is adjacent to one unfilled vertex, $(1,3)$. So, $(1,2)$ fills $(1,3)$. Row 3 has the same initial conditions as row 2. Thus, row 3 is filled. Now, $(1,3)$ can  fill the vertex $(1,4)$. This process repeats through row h and every vertex is filled. 

Thus, the entire graph is filled and Z(G)$\leq$ $h+2$.
\end{proof}

\subsubsection{Root Set of Cardinality 2}
Consider $G=C_w (U) \sqcap C_h$ with $|U| = 2$.
    \begin{thm}
     Let $G=C_w(U) \sqcap C_h$ and $h,w\geq4$.
     
     \begin{enumerate}
    \item If $U=\{i,j\}$ with $j=i+1$, then $Z(G) \leq h$.
    \item If $U=\{i,j\}$ with $j \neq i+1$, then $Z(G) \leq 2h$.
    \end{enumerate}
    \end{thm}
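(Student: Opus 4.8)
The plan is, in each case, to write down an explicit set $S\subseteq V(G)$ of the claimed cardinality and then run the color-change rule step by step to verify that $S$ forces all of $G$, exactly in the style of the $|U|=1$ cycle argument above. First I would normalize: since any two consecutive root vertices $\{i,i+1\}$ can be carried to $\{1,2\}$ by an automorphism of $C_w$, and any pair $\{i,j\}$ to a pair $\{1,j'\}$, it suffices to treat $U=\{1,2\}$ in Case~1 and $U=\{1,j\}$ with $3\le j\le w$ in Case~2. Throughout I would use the structure of $G=C_w(U)\sqcap C_h$: every row is a copy of $C_w$, each of the two root columns is a copy of $C_h$, every non-root column is edgeless, a root-column vertex has degree $4$, and a non-root-column vertex has degree $2$.

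For Case~2 I would take $S$ to be the union of column $1$ and column $2$, so $|S|=2h$; this is legitimate because $j\neq 2$, so column $2$ is non-root. Once columns $1$ and $2$ are filled, each $(1,r)$ has its two column-neighbors and $(2,r)$ filled, so it forces $(w,r)$; and each $(2,r)$, though of degree $4$, has only $(3,r)$ unfilled, so it forces $(3,r)$. Hence after one step columns $3$ and $w$ are filled in every row, and thereafter two ``fronts'' move around each row-cycle $C_w$: a filled non-root vertex always has a unique unfilled row-neighbor and forces it. All $h$ rows do this in lockstep, so when a front reaches the second root column $j$, the vertex $(j,r)$ is forced by its non-root neighbor, and although $(j,r)$ has degree $4$, its two column-neighbors $(j,r\pm1)$ are filled at the same step by lockstep; so one step later $(j,r)$ forces its remaining row-neighbor and the front passes column $j$. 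Thus every row fills completely and $Z(G)\le 2h$. The step needing care is the timing argument at column $j$, together with the degenerate sub-case $j=w$, where $(1,r)$ in fact forces $(w,r)=(j,r)$ immediately; I would settle these by an explicit step count as in the $|U|=1$ proof.

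For Case~1 I would mirror the $|U|=1$ cycle argument while exploiting the second root column to keep $|S|=h$. I would let $S$ consist essentially of one non-root column adjacent to a root column --- say column $w$, which meets root column $1$ --- together with a small number of bootstrap vertices in a root column, arranged so that (a) two chosen rows each receive a pair of adjacent filled vertices and therefore fill completely, the forcing running around their row-cycles $C_w$ just as before, and (b) the $C_h$ structure of root columns $1$ and $2$ then carries the ``row completed'' status down the remaining rows, each newly completed root-column vertex forcing the next one and thereby seeding the next row; I would verify this by induction on the row index. I expect the main obstacle to be exactly this construction: the two root columns obstruct the horizontal cascades --- a root vertex cannot force while it still has an unfilled column-neighbor --- so the bootstrap vertices must be placed so that the vertical ($C_h$) propagation and the horizontal ($C_w$) cascades interleave correctly while the whole set still has only $h$ elements, and getting that bookkeeping to close is the delicate point.
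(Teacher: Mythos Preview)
Your Case~2 is essentially the paper's argument: the paper also fills columns $1$ and $2$ and lets the forcing sweep across the rows. Two small slips: since $2\notin U$, the vertex $(2,r)$ has degree~$2$, not~$4$, which only makes that step easier; and the ``degenerate sub-case $j=w$'' does not occur in Case~2, because in $C_w$ the vertices $1$ and $w$ are adjacent, so $\{1,w\}$ belongs to Case~1. With those corrections your lockstep timing argument at column~$j$ is fine, and in fact your treatment of general $j$ is cleaner than the paper's, which only writes out $U=\{1,3\}$.

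Case~1 is where the gap is. You have not actually produced a set $S$ of size $h$; you have described a shape (``mostly column $w$ plus a few bootstrap vertices'') and deferred the hard part. The difficulty with building $S$ around a non-root column is real: every vertex $(w,r)$ has two unfilled row-neighbours $(w-1,r)$ and $(1,r)$, so no forcing starts until you seed adjacent pairs, and each seed you add must be paid for by removing a vertex from column~$w$, which then blocks the vertical bookkeeping you are counting on. I do not see how to close this with only $h$ vertices along the lines you sketch.

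The paper takes the opposite tack: it builds $S$ around a \emph{root} column. With $U=\{1,2\}$ it sets
\[
S=\bigl(\{(2,r):1\le r\le h\}\setminus\{(2,2),(2,h-1)\}\bigr)\cup\{(3,1),(3,h)\},
\]
so $|S|=h$. The point is that filling almost all of column~$2$ neutralises the column-edges there: the degree-$2$ seeds $(3,1)$ and $(3,h)$ launch horizontal cascades that fill rows $1$ and $h$; then $(1,1),(2,1)$ and $(1,h),(2,h)$ each have three filled neighbours and push vertically to rows $2$ and $h-1$, restoring the two omitted vertices $(2,2),(2,h-1)$; from there the pattern repeats, the two fronts moving inward through the $C_h$ structure of columns $1$ and $2$ until they meet. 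The key idea you are missing is to base $S$ on a root column rather than a non-root one, trading away two of its vertices for the two non-root seeds that start the row cascades.
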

    
   \begin{proof}
   \textbf{Case 1:} \\
  Without loss of generality, let $U= \{1, 2\}$. Let
\begin{center}
  $S =$ \{$(2,1),(2,3),(2,4),...(2,{h-2}),(2,h),(3,1),(3,h)$\}.
\end{center}
That is, $S$ consist of vertices $(3,1),(3,h)$, and every vertex in the second column except for $(2,2) $ and $(2,{h-1})$. Vertices $(2,h)$ and $(3,h)$ are adjacent. Since vertex $(3,h)$ has a degree two and is adjacent to one unfilled vertex, it can fill $(4,h)$. Row $h$ is a cycle and has two adjacent filled vertices. Then $(3,h)$ and $(4,h)$ can fill the vertices along the row until a vertex in a column associated with a vertex in the root set is filled. Thus, the process continues until $(w,h)$ fills $(1,h)$. The same reasoning can be be applied to fill the vertices in row 1. 
 Vertices $(1,h)$ and $(2,h)$ both have degree four and are adjacent to three filled vertices. Thus, $(1,h)$ and $(2,h)$ can fill $(1,{h-1})$ and $(2,{h-1})$, respectively. By the same reasoning, $(1,1)$ and $(2,1)$ can simultaneously fill $(1,2)$ and $(2,2)$, respectively.
   Vertices $(1,{h-1})$ and $(2,{h-1})$ are adjacent. $(3,{h-1})$ has a degree two with one adjacent vertex filled, thus it can fill $(3,{h-1})$. Again, row ${h-1}$ is a cycle with two filled adjacent vertices. Thus, $(3,{h-1})$ and $(4,{h-1})$ can fill the vertices along the row until a vertex in a column associated with a vertex in the root set is filled. The process continues until $(w,{h-1})$ fills $(1,{h-1})$. The same reasoning can be simultaneously be applied to fill row 1. The previous two steps repeat until row h is filled.
   
   Thus, the entire graph is filled and $Z(G)\leq h$.

\textbf{Case 2:} \\
Without loss of generality, let $U= \{1, 3\}$. Let
\begin{center}
   $S=$ \{$(1,1),(1,2),(1,3),...,(1,h),(2,1),(2,2),(2,3)...,(2,h)$\}.
\end{center}
That is, $S$ consist of every vertex in columns one and two. Every vertex in the column 2 has degree two. Each vertex is adjacent to the filled vertex in column one and the unfilled vertex in column three. Thus, each vertex in column two can fill the adjacent vertex in column three. 
Thus, every vertex in column three is filled. 
Then, each vertex in column three has degree four with three of the vertices filled. Thus, each vertex can fill the vertices in the same row in column four.
The first step repeats until vertices in column $w-1$ fill vertices in column $w$.

Thus, the entire graph is filled and $Z(G)\leq 2h$.

   \end{proof}

\subsubsection{Root Set of Cardinality 3}
Consider $G=C_w (U) \sqcap C_h$ with $|U| = 3$.
    
     \begin{thm}Let $G=C_w(U) \sqcap C_h$ and $h,w\geq4$. 
     {If $U=\{i,i+1,i+2\}$ , then $Z(G)\leq h+2$}.
    \end{thm}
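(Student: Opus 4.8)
The plan is to exhibit an explicit zero forcing set of size $h+2$. Since any set of three consecutive vertices of $C_w$ is carried to $\{1,2,3\}$ by a rotation of $C_w$, the products obtained from different choices of $i$ are isomorphic, so we may assume $U=\{1,2,3\}$. In $G=C_w(U)\sqcap C_h$ every row is a copy of $C_w$, columns $1,2,3$ are copies of $C_h$, and---because $w\geq 4$---columns $4,\dots,w$ contain no vertical edges. I would take
$$S=\{(1,y):1\leq y\leq h\}\cup\{(2,1),(2,2)\},$$
all of column $1$ together with the cells $(2,1)$ and $(2,2)$; this set has exactly $h+2$ elements, and the goal is to show it forces $G$.

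First I would fill rows $1$ and $2$. The vertex $(1,1)$ is adjacent to $(1,2)$ and $(1,h)$ (its two neighbors in the cycle of column $1$, both in $S$), to $(2,1)\in S$, and to $(w,1)$; thus $(w,1)$ is its only unfilled neighbor and is forced, and likewise $(1,2)$ forces $(w,2)$. Now $(w,1)$ has no vertical edges, and of its two row-neighbors $(1,1)$ is filled, so it forces $(w-1,1)$, which forces $(w-2,1)$, and this cascade runs around row $1$ through $(3,1)$ until it meets $(2,1)\in S$; hence row $1$ is entirely filled. Symmetrically, using $(2,2)\in S$, row $2$ is entirely filled.

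Next I would prove by induction on $y$ that if rows $1,\dots,y$ are filled---the base case $y=2$ having just been established---then row $y+1$ is filled. Assuming rows $1,\dots,y$ filled, the vertex $(2,y)$ has its vertical neighbor $(2,y-1)$ filled (it lies in row $y-1$, or is the cell $(2,1)\in S$ when $y=2$) and its row-neighbors $(1,y),(3,y)$ filled, so its only unfilled neighbor $(2,y+1)$ is forced; similarly $(3,y)$ forces $(3,y+1)$, using that $(3,y-1)$ lies in the filled row $y-1$ (row $1$ when $y=2$). Since $(1,y+1)$ belongs to column $1$ and hence to $S$, the vertex $(1,y+1)$ now has $(1,y),(1,y+2),(2,y+1)$ all filled and only $(w,y+1)$ unfilled, so it forces $(w,y+1)$, and a cascade around row $y+1$ just as in the base case fills the rest of that row. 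Because column $1$ is itself a cycle, the indices $y\pm 1$ are read modulo $h$ and there is no separate top-row boundary case. Continuing the induction up to $y=h-1$ fills every row, hence all of $G$, so $Z(G)\leq|S|=h+2$.

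The point requiring care is the cyclic wraparound in columns $1,2,3$: in contrast to the path-by-path products, a vertex in one of these columns has two vertical neighbors, so a single seed cannot simply march down a column; this is why $S$ saturates all of column $1$ and adds $(2,1),(2,2)$ to get rows $1$ and $2$ started. Once those two rows are filled the induction is routine, and I would additionally verify the extreme cases $w=4$ and $h=4$ directly to confirm that the cascades and the modular indexing behave, though no new idea is needed there.
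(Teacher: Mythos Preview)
Your proof is correct and follows essentially the same approach as the paper: both arguments exhibit a zero forcing set consisting of one entire column from the root-set block together with the two vertices $(2,1),(2,2)$, then fill rows $1$ and $2$ by a cascade around the $C_w$ cycle and propagate downward one row at a time. The only difference is cosmetic---you saturate column $1$ and cascade through $(w,y),(w-1,y),\dots,(3,y)$, whereas the paper saturates column $3$ and cascades in the opposite direction through $(4,y),\dots,(w,y),(1,y)$; these choices are related by the reflection of $C_w$ fixing vertex $2$, so the arguments are equivalent.
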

   \begin{proof} Without loss of generality, let $U= \{1, 2, 3\}$. Let 
\begin{center}
   $S=$ \{$(2,1),(2,2),(3,1),(3,2),(3,3),..,(3,h)$\}
\end{center}
That is, $S$ consists of vertices $(3,1), (3,2)$, and all vertices in column 4.
Vertices $(2,1)$ and $(3,1)$ are filled and adjacent. Note row 1 is a cycle with two adjacent vertices, $(2,1)$ and $(3,1)$, filled. Also, vertex $(2,1)$ cannot fill any vertices in the cycle since it has more than one neighbor that is unfilled. However, the vertex (3,1) is in the root set and all the vertices in column 3 are filled. Thus, $(3,1)$ has one unfilled neighbor and can fill row 1 until $(w,1)$ fills $(1,1)$. Thus, row 1 is filled. Similarly, $(3,2)$ can fill row 2. Vertices $(1,2)$ and $(2,2)$ both have degree 4 and are adjacent one unfilled vertex. Thus, they can fill $(1,3)$ and $(2,3)$, respectively. Row 3 has the same initial conditions as row 1. Thus, $(3,3)$ can fill row 3. The process repeats through row $h$ and the entire graph is filled.

Thus, the entire graph is filled and $Z(G)\leq h+2$.
\end{proof}

\subsubsection{Root Set of Cardinality $w-1$}
Consider $G=C_w (U) \sqcap C_h$ with $|U| = w - 1$.
\begin{thm}
     Let $G=C_w(U) \sqcap C_h$ and $h,w\geq4$. {If $U=\{i,i+1,i+2,...,w-1\}$, then $Z(G)\leq 2h-2$}.
    \end{thm}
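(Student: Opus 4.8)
The plan is to exhibit an explicit zero forcing set $S$ of size $2h-2$. By the isomorphism remark (rotating the cycle $C_w$), we may assume $U=\{1,2,\dots,w-1\}$, so that column $w$ is the unique column with no internal edges: each $(w,j)$ has degree $2$ with neighbours exactly $(1,j)$ and $(w-1,j)$, while every other vertex has degree $4$. I would take $S$ to be all of column $1$ together with all of column $2$ except for two vertices lying in two consecutive rows; since column $1$ contributes $h$ vertices and the truncated column $2$ contributes $h-2$, we get $|S|=2h-2$. The point of using column $w$'s degree-$2$ vertices is that they act as one-step relays between column $w-1$ and column $1$, which is what makes $2h-2$ rather than $2h$ feasible.

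To see that $S$ forces $G$: because column $1$ is full and column $2$ is almost full, each $(1,j)$ with $(2,j)\in S$ has $(w,j)$ as its only unfilled neighbour, so it forces $(w,j)$; thus column $w$ fills in at all but the two ``gap'' rows. Each such $(w,j)$ is a degree-$2$ vertex whose neighbour $(1,j)$ is already filled, so it forces $(w-1,j)$, filling most of column $w-1$; meanwhile each $(2,j)\in S$ whose column-$2$ neighbours are also in $S$ has only $(3,j)$ unfilled and forces it. From here the process sweeps column by column around the cycle $C_w$: column $w$ fills column $w-1$, then column $w-2$, and so on, while column $2$ fills column $3$, then column $4$, and so on, the two fronts meeting on the far side. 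Finally the two omitted vertices of column $2$ must be recovered: once columns $1$ and $3$ are completely filled, each vertex of column $2$ adjacent to a gap vertex has that gap vertex as its unique unfilled neighbour and forces it, after which any remaining positions fill along their rows; hence the whole graph is filled and $Z(G)\le 2h-2$.

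The hard part is this recovery step together with bookkeeping the unfilled rows during the column sweep. During the sweep the ``holes'' created around the two gap rows grow by one row on each side per column (both in the direction $2,3,4,\dots$ and in the direction $w-1,w-2,\dots$), and one must verify that no hole ever isolates an entire row before column $2$ has healed its gap — a row all of whose filled vertices lie in a single column can never propagate within the cycle $C_w$, which would stall the process. I would handle this by placing the two omitted vertices in adjacent rows, tracking the hole sizes explicitly, and checking that the flanking columns $1$ and $3$ are filled in time for the endpoints of column $2$ to close the gap; the small cases (in particular $w=4$, where the two sweep fronts meet immediately and rows $1,2$ fill completely, and small $h$) I would verify directly.
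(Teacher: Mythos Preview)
Your construction fails whenever $w$ is large relative to $h$; the ``hard part'' you flag is in fact fatal, not merely technical. Take $h=4$, $w=5$, $U=\{1,2,3,4\}$, and omit $(2,1),(2,2)$ so that $S=\{(1,1),(1,2),(1,3),(1,4),(2,3),(2,4)\}$. In the first round only $(1,3)$ and $(1,4)$ have a unique unfilled neighbour, forcing $(5,3),(5,4)$; these degree-$2$ vertices then force $(4,3),(4,4)$. At that point every filled vertex has at least two unfilled neighbours --- for instance $(4,3)$ sees both $(3,3)$ and $(4,2)$, while $(2,3)$ sees both $(3,3)$ and $(2,2)$ --- and the process halts with column $3$ and half of columns $2,4,5$ still empty. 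The same stall occurs for every $w\ge 5$ when $h=4$.

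The structural reason is the one you identified but did not resolve: each column step enlarges the hole by two rows, so the forward sweep's hole at column $k$ already has size $2(k-1)$, and once this reaches $h$ the sweep contributes nothing. Crucially, the forward and backward holes are \emph{concentric} (both centred on the same two gap rows), so they are nested rather than complementary and neither sweep fills what the other missed. Your proposed recovery step requires column $3$ to become full, but the rows missing from column $3$ are precisely the gap rows plus their neighbours, and nothing ever fills them; column $4$ has an even larger concentric gap, and so on. No placement of the two omitted vertices avoids this once $w$ exceeds roughly $h/2+2$.

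The paper's set is shaped differently: instead of two nearly-full columns it takes one full column together with initial segments of the two extreme \emph{rows} (rows $1$ and $h$), letting the forcing propagate first along rows and then cascade vertically, which sidesteps the concentric-hole obstruction. (You should read that argument with care, however: the set written there has cardinality $h+2(\lceil w/2\rceil-1)$, which is not $2h-2$ in general, so the paper's own proof needs scrutiny as well.)
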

    \begin{proof}Without loss of generality, let $U= \{2,3,4,...,w\}$. Let
\begin{center}
  $S=$ \{$(2,1),(2,2),...,(2,h),(3,1),(4,1),(5,1),(\ceil{\frac{w}{2}}+1,1),(3,h),(4,h),(5,h),(\ceil{\frac{w}{2}}+1,h)$\}.    \end{center}

  That is, S consist of every vertex in column $2$, vertices $(3,1)$ through $(\ceil{\frac{w}{2}}+1,1)$, and vertices $(3,h)$ through $(\ceil{\frac{w}{2}}+1,h)$.
  Vertices $(3,1)$ through $(\ceil{\frac{w}{2}},1)$ are each adjacent to one unfilled vertex and thus will fill $(3,2)$ through $(\ceil{\frac{w}{2}},2)$. By the same reasoning, $(3,h)$ through $(\ceil{\frac{w}{2}},h)$ can fill $(3,{h-1})$ through $(\ceil{\frac{w}{2}},{h-1})$. filled vertices will continue to fill consecutive rows with one less vertex in the column until $(3,h+3)$ fills $(3,h+2)$ and $(3,h-2)$ fills $(3,h-1)$. Now, vertices $(2,1),(2,2),...,(2,h-1)$ and $(2,h+2),(2,h+3),...,(2,h)$ are each adjacent to one unfilled vertex and can fill vertices $(1,1),(1,2),...,(1,h-1)$ and $(1,h+2),(1,h+3),...,(1,h)$. Vertices $(1,1),(1,2),...,(1,h-1)$ and $(1,h+2),(1,h+3),...,(1,h)$ are each adjacent to one unfilled vertex and can fill vertices $(w,1),(w,2),...,(w,h-1)$ and $(w,h+2),(w,h+3),...,(w,h)$. 
  Vertices $(w,1),(w,2),...,(w,h-2)$ are all adjacent to one unfilled vertex and can fill $(w-1,1),(w-1,2),...,(w-1,h-2)$. Similarly, vertices $(w,h+3),(w,h+4),...,(w,h)$ can fill $(w-1,h+3),(w-1,h+4),...,(w-1,h)$. This process repeats until $(\ceil{\frac{w}{2}}+2,1)$ fills $(\ceil{\frac{w}{2}}+1,1)$ and $(\ceil{\frac{w}{2}}+2,h)$ fills $(\ceil{\frac{w}{2}}+1,h)$.
 Now, row $1$ and row $h$ are filled. The vertices in columns $2$ through $h$ can fill any unfilled vertices in rows $2$ and ${h-1}$. This process repeats until all vertices in columns $2$ through $h$ are filled. 
 Vertices $(2,h)$ and $(2,h+1)$ are now adjacent to one unfilled vertex and can fill $(1,h)$ and $(1,h+1)$. Then entire graph is filled.

Thus, the entire graph is filled and $Z(G)\leq 2h$.
    \end{proof}
    
\subsection{Path by Cycle}

Next, consider the hierarchical product of a path by a cycle. In contrast to cycle by cycle, the placement of the root set causes great variation in the zero forcing number.

\subsubsection{Root Set of Cardinality $m$}
For a path by cycle hierarchical product a unique forcing set occurs for a root set of cardinality $m$ and the vertices in the root set being adjacent on the end.

\begin{thm}
\label{pathendproof}
Let $G=P_w(U) \sqcap C_h$ with $U=\{1,2,...,m\}$ or $U=\{w-m,...,w-1,w\}$ where $0 \leq m < w$.
    \begin{enumerate}
        \item If $h \leq 2m$, then $Z(G) \leq h$.
        \item If $2m < h < 4m$, then $Z(G) \leq 2m$.
        \item If $h \leq 4m$, then $Z(G) \leq \ceil[\big]{\frac{h}{2}}$.
    \end{enumerate}
\end{thm}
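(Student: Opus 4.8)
The plan is to prove each of the three bounds by the same method: exhibit an explicit set $S$ of initially filled vertices and verify, by repeated use of the color change rule, that $S$ forces all of $G$, so $Z(G)\le|S|$. Since reversing the labelling of the path $P_w$ yields an isomorphic graph, it suffices to take $U=\{1,\dots,m\}$. Throughout, it helps to read $G$ as a cylinder with combs attached: columns $1,\dots,m$ each carry a copy of $C_h$, so the subgraph they induce is the cylinder $P_m\Box C_h$ of width $m$ and circumference $h$; off every vertex $(m,y)$ hangs a pendant path $(m,y)-(m+1,y)-\cdots-(w,y)$; and each row $y$ is a copy of $P_w$.

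For Case 1 ($h\le 2m$), take $S$ to be all of column $1$, so $|S|=h$. Column $1$ is then a fully filled $C_h$, so each $(1,y)$ has both cyclic neighbors filled and exactly one unfilled neighbor $(2,y)$; hence column $2$ is forced. If $m\ge 2$, the same step applied to columns $2,3,\dots,m$ in turn fills all the root columns, after which each $(m,y)$ forces $(m+1,y)$ and then every row --- now filled at column $m+1$ --- fills across to column $w$ since it is a path. So $S$ is a zero forcing set. (The argument works for every $h\ge 3$; the hypothesis $h\le 2m$ merely marks the range of $h$ in which this is the relevant bound.)

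For Cases 2 and 3 the set $S$ combines pendant tips with a handful of cylinder vertices. A tip $(w,y)$ is a leaf, so putting it in $S$ forces row $y$ to fill leftward from column $w$ down to column $m$; this is the mechanism for feeding filled vertices into the cylinder. In Case 2 ($2m<h<4m$) one seeds a full $2\times m$ block of the cylinder at two adjacent cycle positions --- put $\{(i,1),(i,2):1\le i\le m-1\}$ in $S$ and let the tips $(w,1),(w,2)$ drive the filling of $(m,1)$ and $(m,2)$ --- giving $|S|=2m$; once rows $1$ and $2$ of the cylinder, together with the pendants of those rows, are full, the cylinder $P_m\Box C_h$ fills one width-$m$ ring at a time around the cycle, each filled ring forcing the next, and finally every $(m,y)$ forces $(m+1,y)$ so the remaining pendant paths fill. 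In Case 3 ($h\ge 4m$) one keeps a short block near one point of the cycle but replaces most of the tip-seeds by tips in only \emph{every other} row, placed just ahead of the advancing front so it never stalls for lack of a filled pendant; a careful choice of the pattern makes $|S|=\lceil h/2\rceil$, and $h\ge 4m$ is exactly what leaves enough room on the cycle for the block and the alternating tips to coexist without colliding. In each case the verification is a long but routine bookkeeping of the color change rule, organised as: seeded rows fill inward to column $m$; the block forces a neighbourhood of the cylinder; the filling advances around $P_m\Box C_h$; and each root column, once full at a row, pushes the rest of that row's pendant path.

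The hard part is the propagation around the cylinder at the interface column $m$. Inside $P_m\Box C_h$ one has the clean rule that a full column forces the next, but $(m,y)$ has the extra neighbor $(m+1,y)$ in the pendant, which destroys that rule at column $m$ unless the pendant of row $y$ is already filled; so one must schedule the forcing so that each row's pendant path is completed before $(m,y)$ is called on to force, while also checking that the ring-by-ring (respectively triangular/alternating) propagation around the cycle never stalls and never needs more seeds than claimed. The thresholds $h\le 2m$, $2m<h<4m$, and $h\ge 4m$ are precisely the inequalities that make this accounting go through, so the real content of the proof is carrying out these estimates.
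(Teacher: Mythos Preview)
Your Case~1 is correct (and in fact cleaner than the paper's: you seed column~$1$ and push rightward, whereas the paper seeds column~$w$ and pushes leftward; both give $|S|=h$).

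Case~2, however, has a genuine gap: the set $S=\{(i,1),(i,2):1\le i\le m-1\}\cup\{(w,1),(w,2)\}$ is \emph{not} a zero forcing set, and your description of the propagation is incorrect. You claim that once rows $1$ and $2$ of the cylinder are full, ``the cylinder $P_m\Box C_h$ fills one width-$m$ ring at a time around the cycle, each filled ring forcing the next.'' But you yourself identify the obstruction in your final paragraph: $(m,y)$ has the pendant neighbour $(m+1,y)$, so $(m,y)$ cannot participate in forcing the next ring unless row~$y$'s pendant is already filled. With only two pendant tips seeded, the propagation does not advance in full rings; it shrinks. Concretely, take $m=3$, $h=9$: after rows $1,2$ are completely filled, row~$3$ fills in columns $1,2,3$, but then $(3,3)$ has both $(3,4)$ and $(4,3)$ unfilled and stalls, so row~$4$ only fills in columns $1,2$, row~$5$ only in column~$1$, and symmetrically rows $9,8,7$ fill in columns $1\text{--}3$, $1\text{--}2$, $1$. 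At that point every filled boundary vertex has exactly two unfilled neighbours and the process is stuck, with row~$6$ entirely empty.

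The paper's Case~2 uses a different idea: it seeds $2m$ \emph{consecutive} pendant tips $S=\{(w,1),\dots,(w,2m)\}$, so rows $1,\dots,2m$ are completely filled out to column~$w$. Now column~$m$ is never the obstacle in those rows; the shrinking happens from column~$m$ \emph{inward} (a triangle narrowing to column~$1$ at rows $m,m+1$), then bounces back outward from column~$1$ to refill the full $2m\times m$ block, after which the block pushes cyclically into rows $2m+1$ and $h$ and the triangle-and-bounce repeats. Your scheduling remark (``each row's pendant path must be completed before $(m,y)$ is called on to force'') is exactly the constraint that rules out your two-tip set: with $h>2m$ you cannot pre-fill every pendant using only $2m$ seeds, so the mechanism must instead be the inward triangle with the bounce at column~$1$, which requires the $2m$-tip block. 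Your Case~3 sketch inherits the same problem, since it is built on the same ring-by-ring picture.
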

\begin{proof}
\textbf{Case 1:}\\
Assume $h<2m$ and $U=\{1,2,...,m\}$. Note the case where $U=\{w-m,...,w-1,w\}$ is isomorphic. Let
\begin{center}
    $S= \{(w,1),(w,2),...,(w,h)\}$.
\end{center}
That is, $S$ consists of all the vertices in column $w$. We will show that $S$ is a zero forcing set. Note that all of the vertices in $S$ have degree $1$ and the remaining vertices in the rows through column $m$ are in a path, so they are filled in reverse order from right to left. The vertices in columns $m,...,w$ are filled. The vertex $(m,k)$ for $k=1,...,h$ has degree $4$ and one unfilled neighbor $((m-1),k)$. Hence, $(m,k)$ fills $((m-1),k)$. This process continues through column $1$. Thus, the whole graph filled and $Z(G) \leq h$.\\

\textbf{Case 2:} \\
Next, without loss of generality, assume $h=4m$ and $U=\{1,2,...,m\}$. Let
\begin{center}
    $S=\{(w,1),(w,2),(w,3),...,(w,2m)\}$.
\end{center}
That is, $S$ consists of the vertices in column $w$ in rows $1,..,2m$.
\begin{enumerate}[(a)]
    \item Note that all of the vertices in $S$ have degree 1 and the remaining vertices in the rows through column $m$ are in a path, so they are filled in reverse order from right to left. The vertices in columns $m,...,w$ and rows $1,...,2m$ are filled.
    \item The vertex $(m,k)$ for $k=1,...,2m$ have degree 4 and all but two vertices have 1 unfilled neighbor. That is, $(m,1)$ and $(m,2m)$ have two unfilled neighbors. Therefore, vertices $(m,k)$ for $k=2,...,2m-1$ can fill their neighbor, $(m-1,k)$. By the same reasoning, columns $m-2,...,1$ have vertices filled. That is, in column $m-i$ for $i=1,...,m-1$, $2m-2i$ vertices are filled. Vertices in rows $(2m-i)+1,...,2m$ and rows $1,...,(1+i-1)$ are not filled for column $i$. When this step of the forcing process is complete, column $1$ has vertices $(1,m)$ and $(1,m+1)$ filled.
    \item The vertices $(1,m)$ and $(1,m+1)$ have degree $3$ and one unfilled neighbor, $(1,m-1)$ and $(1,m+2)$ which are filled by the respective vertex. Now, vertices $(1,m-1)$, $(2,m-1)$, $(1,m+2)$, and $(2,m+2)$ are filled with one unfilled neighbor. Simultaneously, the vertices $(1,m-2)$, $(2,m-2)$, $(1,m+3)$, and $(2,m+3)$ are filled by the respective aforementioned vertex. This reasoning follows for filling the rows $m+1,...,2m$ in order and $m-3,...,1$ in reverse order. As the forcing of these rows progresses, a given row has one more vertex to be filled than the previous row. So, when rows $1$ and $2m$ are filled simultaneously, they each have $m-1$ vertices being filled. Now, all of the vertices in rows $1,...,2m$ are filled.
    \item Note that the vertex $(u,2m)$ for $u$ being a vertex in the root set, has $1$ unfilled neighbor, $(u,2m+1)$. Hence, $(u,2m)$ fills $(u,2m+1)$. Similarly, $(u,1)$ can fill $(u,h)$. (Recall that h=4m.)
    \item Now, vertices $(u,2m+1)$ for $u=1,...,m-1$ have one unfilled neighbor $(u,2m+2)$, so it is filled by $(u,2m+1)$. The same occurs for $(u,h-1)$. This forcing process continues through the rows. Each time vertices are filled in a given row, one less vertex is filled than the number filled in the row that filled them. So, for example vertices $(u,2m+1)$ and $(u,4m)$ for $u=1,..,m$ are filled, and all but $u=m$ has one unfilled neighbor. Hence, $(u,2m+1)$ and $(u,4m)$ for $u=1,...,m-1$ fill $(u,2m+2)$ and $(u,4m-1)$ respectively. Now the vertices $(m-1,2m+2)$ and $(m-1,2m-1)$ are the ones that cannot fill.
    \item This process continues in the rows until vertices $(1,3m)$ and $(1,3m+1)$ are filled and are the only filled vertices in their row. Now in columns $m-i$ for $i=0,...m-2$ there are $2m-(2i+2)$ vertices unfilled. 
    \item Vertices $(1,3m)$ and $(1,3m+1)$ have degree $3$ and one unfilled neighbor, $(2,3m)$ and $(2,3m+1)$ respectively. Hence, $(2,3m)$ and $(2,3m+1)$ are filled by the respective vertex. Similarly, the unfilled vertices in columns $m-(m-2),...,m$ are filled across the rows from left to right. Now, all of the vertices in columns $1,...,m$ are filled. Since the remaining unfilled vertices are in a path in rows $2m+1,...,4m$ they are filled across in order from left to right. Thus, the whole graph is filled and $Z(G) \leq 2m$.
    Note here that $S$, which has $2m$ vertices, when $h=4m$ causes all of column $1$ to be filled. That is, when $S$ has 2m vertices, it can fill up to $4m$ vertices in column $1$. So, for any height between $2m$ and $4m$, $S$ will fill all of the first column. Once all of column $1$ is filled, the rest of the graph can be filled. So any height from $2m,...,4m$ can be filled by $S$. Thus, for $2m \leq h < 4m$, $Z(G) \leq 2m$.\\
 \end{enumerate}
\textbf{Case 3:}\\
Next, suppose $h=4m+2i$ ,that is, $h$ is even and $U=\{1,2,...,m\}$. Let
\begin{center}
    $S=\{(w,1), (w,2),...,(w,2m),(w,2m+2),(w,2m+4),...,(w,h-2m)\}$.
\end{center}
That is, $S$ is vertices in column $w$ in rows $1,...,2m$ and all even rows up to $h-2m$. Note that this is exactly $i$ extra even rows when $h=4m+2i$, so there are $2m+i$ vertices in $S$.

\begin{enumerate}[(a)]
    \item The forcing process begins similarly to Case 2. That is, the forcing process follows through step (e) but when we reach step (f), since $h >4m$, the filling process will not result in all of column $1$ to be filled. Again, vertex $(1,3m)$ will be filled and the only one in its row. 
    However, vertices $(1,3m+1),(1,3m+2),...,(1,h-m)$ are unfilled.

    \item Now, consider how the vertices $(w, 2m+2), (w,2m+4), (w, 2m+6),…,(w,h-8), (w,h-6)$ in $S$ force. Note that all of the vertices in $S$ have degree $1$ and the remaining vertices in the rows through column $m$ are in a path, so they are filled in reverse order from right to left. The vertices in columns $m,...,w$  and rows $2m+2, 2m+4, 2m+6...,h-8, h-6$ are filled. 

    \item Vertices $(m,2m+2)$ and $(m-1, 2m+2)$ are filled, have degree $4$, and one unfilled neighbor, $(m, 2m+3)$ and $(m-1, 2m+3)$, respectively. Thus, the latter vertices are filled by the former vertices respectively. Now all vertices corresponding to $(u,j)$ for $u$ in the root set and $1 \leq j\leq 2m+3$ are filled. In row $2m+4$, vertices $(1,2m+4)$ through $(m-2,2m+4)$ are then filled by the vertices in row $2m+3$. Since row $2m+4$ is one of the rows with $(w,2m+4)$ in $S$, vertices $(m,2m+4)$ through $(w,2m+4)$ are filled. This process continues filling all vertices $(u,j)$ in rows $1 \leq j\leq h-2m+1$.

    \item Then, in rows $h-2m+2$ through $h-m$ are filled by the row below with one less vertex filled then the number of filled vertices in the row below. Similarly, rows $h-1$ down to row $h-m+1$ are filled by the row above with one less vertex filled than the number of filled vertices in the row above.

    \item Now, vertices $(1,h-m)$ and $(1,h-m+1)$ are the only filled vertices in their respective rows. Each of these vertices has degree 3 with one unfilled vertex. Thus, vertices $(2, h-m)$ and $(2,h-m+1)$ are filled respectively by $(1,h-m)$ and $(1,h-m+1)$. The first two columns of $G$ are now entirely filled and can continue to fill the remaining vertices until all the vertices in the columns corresponding to the root set are filled. 
    
    \item The remaining unfilled vertices are those in the odd rows that do not have vertices in $S$. Since the vertices in column $m$ for each of these rows is filled and the remaining vertices form a path, the vertices in these rows are successively filled from left to right.

\end{enumerate}

Thus, the whole graph is filled and $Z(G) \leq \frac{h}{2}$.

The case in which $h=4m+(2i-1)$, $i=1,2,...$ is similar. That is, $h$ is odd and without loss of generality let $U=\{1,2,...,m\}$. Let
\begin{center}
    $S=\{(w,1), (w,2),...,(w,2m),(w,2m+2),(w,2m+4),...,(w,h-2m+1)\}$.
\end{center}

That is, $S$ is vertices in column $w$ in rows $1,...,2m$ and all even rows up to $h-2m+1$. Note that this is exactly $i$ extra even rows when $h=4m+2i$, so there are $2m+i$ vertices in $S$.

The forcing process begins similarly to when $h$ is even. That is, steps (a) through (d) are the same. When step (e) is reached, only vertex $(1,h-m)$ is filled in its respective row. It has degree 3 with one unfilled vertex. Thus, vertex $(2,h-m)$ is filled by $(1,h-m)$. The first two columns of $G$ is now entirely filled and can continue to fill the remaining vertices until all the vertices in the columns corresponding to the root set are filled. Step (f) from the even case applies here. Thus, the whole graph is filled and $Z(G) \leq \frac{h+1}{2}$.

Thus, if $h > 4m$, then $Z(G)\leq \ceil[\big]{\frac{h}{2}}$

 \end{proof}

\subsubsection{Root Set of Cardinality 1}

Next we consider path by cycle hierarchical products with one vertex in the root set are considered. Note that the cases in which $U=\{1\}$ or $\{w\}$ are isomorphic.

\begin{thm} Let $G=P_w(U) \sqcap C_h$.
\begin{enumerate}
    \item If $U=\{1\}$ or $U=\{w\}$, then $Z(G) \leq \ceil[\big]{\frac{h}{2}}$.
    \item If $U=\{i\}$ and $i \neq 1,w$, then $Z(G) \leq h$.
\end{enumerate}

\end{thm}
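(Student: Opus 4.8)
The plan is to imitate the proof of Theorem~\ref{pathsize1}, since $G = P_w(U)\sqcap C_h$ differs from $P_w(U)\sqcap P_h$ only in that the column(s) indexed by $U$ carry cycle edges instead of path edges; this extra adjacency never obstructs a forcing argument, so essentially the same zero forcing sets should work, with the one genuinely new ingredient being a ``wrap-around'' force inside the cycle column. As before, it suffices to prove each case for one representative of the isomorphism class (so I take $U=\{w\}$ in Case~1 and $i\neq 1,w$ in Case~2), and I will repeatedly use that a vertex in column $1$ has degree $1$ (column $1\notin U$), so it fills its whole row left to right, and that each ``row'' is a copy of $P_w$.

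For \textbf{Case 1}, with $U=\{w\}$ the graph is a cycle $C_h$ on $(w,1),\dots,(w,h)$ with a path of $w-1$ extra vertices pendant at each cycle vertex. If $h$ is odd I would take $S=\{(1,1),(1,3),(1,5),\dots,(1,h)\}$ (the odd rows of column $1$), which has $\lceil h/2\rceil$ vertices; every odd row fills, after which $(w,1)$ has both $(w-1,1)$ and $(w,h)$ filled and so forces $(w,2)$, then $(w,3)$ forces $(w,4)$, and in general $(w,2k-1)$ forces $(w,2k)$ in sequence, filling all even rows and hence $G$. If $h$ is even the odd-row set fails (it leaves $(w,1)$ with two unfilled cycle neighbours), so I would instead use $S=\{(1,2)\}\cup\{(1,3),(1,5),\dots,(1,h-1)\}$, again of size $\lceil h/2\rceil=h/2$: rows $2,3,5,7,\dots,h-1$ fill, then $(w,2)$ and $(w,3)$ are adjacent filled vertices each with a unique unfilled neighbour and simultaneously force $(w,1)$ and $(w,4)$, filling rows $1$ and $4$; now $(w,1)$ forces $(w,h)$ the short way around $C_h$, $(w,5)$ forces $(w,6)$, and then $(w,2k+1)$ forces $(w,2k+2)$ for $k\ge 2$, filling the remaining even rows $6,8,\dots,h-2$. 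The degenerate cases $h=3$ and $h=4$ should be verified directly.

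For \textbf{Case 2}, with $i\neq 1,w$, I would take $S$ to be all of column $1$, so $|S|=h$. Each vertex of $S$ has degree $1$ and forces its neighbour in column $2$, and then the argument of Theorem~\ref{pathsize1}, Case~2, transfers verbatim: columns $2,3,\dots,i$ fill left to right (the cycle edges inside column $i$ merely give $(i,j)$ extra already-filled neighbours), each $(i,j)$ then forces $(i+1,j)$, and columns $i+1,\dots,w$ fill left to right; hence $Z(G)\le h$.

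The step I expect to be the main obstacle is the even sub-case of Case~1: one must check carefully that after the two simultaneous forces $(w,2)\to(w,1)$ and $(w,3)\to(w,4)$, the propagation inside the cycle column actually reaches every remaining even row in the correct order — in particular that $(w,1)$ genuinely has $(w,h)$ as its only unfilled neighbour at the moment it is called on to force it, and that the chain $(w,5)\to(w,6),\,(w,7)\to(w,8),\dots$ never stalls — together with confirming the bookkeeping specializes correctly when $h$ is small. The rest is a routine transcription of the path-by-path proof, so I would keep those verifications brief.
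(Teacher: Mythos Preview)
Your proposal is correct. Case~2 is identical to the paper's argument: both take $S$ to be all of column~$1$ and push left to right through column~$i$ and beyond.

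For Case~1 your route differs from the paper's. The paper does not argue directly at all; it simply observes that $U=\{1\}$ (or $\{w\}$) is the $m=1$ instance of Theorem~\ref{pathendproof} and cites that result. Your approach instead adapts the path-by-path construction of Theorem~\ref{pathsize1} and adds the one genuinely new ingredient, the wrap-around force $(w,1)\to(w,h)$ (or the fact that $(w,h)$ is already filled when $h$ is odd) inside the cycle column. Both arguments are valid; yours is more self-contained and transparent for this particular root set, while the paper's buys the bound as a corollary of a stronger statement covering $U=\{1,\dots,m\}$ for arbitrary $m$. Your odd-subcase set $S=\{(1,1),(1,3),\dots,(1,h)\}$ is in fact cleaner than the corresponding set in Theorem~\ref{pathsize1}, precisely because the cycle edge $(w,1)(w,h)$ lets the chain start immediately at $(w,1)$. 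The concern you flag in the even subcase is not a real obstacle: once rows $1$--$4$ and the odd rows are filled, $(w,1)$ has exactly one unfilled neighbour $(w,h)$, and independently the chain $(w,5)\to(w,6),\,(w,7)\to(w,8),\dots,(w,h-3)\to(w,h-2)$ proceeds without interference, so every even row is reached; the small cases $h=3,4$ also check out with your sets.
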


\begin{proof}
\textbf{Case 1:} \\
Case $1$ is proven by Theorem \ref{pathendproof}.

\textbf{Case 2:} \\
Assume $U=\{i\}$ and $i \neq 1,w$. Let
\begin{center}
  $S=$ \{$(1,1), (1,2), ..., (1,h)$\}
\end{center}

We will show that $S$ is a zero forcing set. All of the vertices in $S$ have degree 1, so they can fill their neighbor in column 2. Therefore, all of column 2 is filled. Since the remaining vertices in each row are in a path until column $i$, they are all filled in order from left to right. Thus, vertices in columns $1,2,..,i$ are all filled. The vertex $(i,j)$ for $j=1,...,h$ has degree 4 and one unfilled neighbor, $(i+1,j)$. Hence, $(i,j)$ fills $(i+1,j)$ resulting in column $i+1$ begin entirely filled. The remaining vertices are filled in order from left to right across each row since they are in a path. Thus, the whole graph is filled and $Z(G) \leq h$. 
\end{proof}

\subsubsection{Root Set of Cardinality 2}
We now consider $P_w(U) \sqcap C_h$ with $|U|=2$

\begin{thm} Let $G=P_w (U) \sqcap C_h$.
If $U=\{i,j\}$, then $Z(G) \leq h$.
\end{thm}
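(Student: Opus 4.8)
The plan is to exhibit an explicit zero forcing set of cardinality $h$. I would take $S$ to be the entire first column, $S=\{(1,1),(1,2),\dots,(1,h)\}$, so that $|S|=h$, and it then suffices to show $S$ is a zero forcing set. The idea is that the filling propagates one column at a time, from column $1$ across to column $w$, and the positions of the two root vertices never actually enter the argument.

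First I would handle the initial round. The vertex $1$ is an endpoint of $P_w$, so in $G$ the vertex $(1,y)$ has $(2,y)$ as its only neighbour coming from a path-edge; if $1\in U$ it additionally has the cycle-neighbours $(1,y-1)$ and $(1,y+1)$, but these lie in column $1$ and hence belong to $S$. Either way $(1,y)$ has the single unfilled neighbour $(2,y)$, so $(1,y)$ forces $(2,y)$, and after this round columns $1$ and $2$ are entirely filled.

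Next I would run the inductive step: for $2\le c<w$, assuming columns $1,\dots,c$ are filled, each $(c,y)$ forces $(c+1,y)$. Indeed the neighbours of $(c,y)$ are $(c-1,y)$ and $(c+1,y)$ from the two path-edges at $c$, together with $(c,y-1)$ and $(c,y+1)$ when $c\in U$; of these, $(c-1,y)$ is filled since column $c-1$ is among columns $1,\dots,c$, and the cycle-neighbours, if present, lie in column $c$ and are filled, so $(c+1,y)$ is the unique unfilled neighbour. Hence column $c+1$ becomes filled, and by induction every column is eventually filled, so $S$ is a zero forcing set and $Z(G)\le h$.

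The proof has no real obstacle; the only points needing a little care are the boundary columns $c=1$ and $c=w$ and the possibility that $1\in U$ or $w\in U$, where one must check that the ``exactly one unfilled neighbour'' claim still holds (it does, for the reasons above). It is also worth remarking that this argument is insensitive to the relative position of the two elements of $U$ — in particular it makes no use of whether they are adjacent — which is exactly why the bound obtained here is the uniform value $h$ and is in general weaker than the bound $\lceil h/2\rceil$ available from Theorem \ref{pathendproof} when the root set consists of two vertices adjacent at an end of $P_w$.
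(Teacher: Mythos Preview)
Your proof is correct and uses the same zero forcing set $S=\{(1,1),\dots,(1,h)\}$ and the same column-by-column propagation as the paper. The only difference is presentational: the paper splits into four cases according to whether $i=1$, $j=w$, or $j=i+1$, whereas your single inductive step handles all positions of the root vertices uniformly, which is cleaner.
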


\begin{proof} Let
    \begin{center}
         $S=$ \{$(1,1), (1,2), (1,3),..., (1,h)$\}.
    \end{center}
 
Assume $U=\{i,j\}$ such that $i \neq 1$, $j \neq w$, and $j \neq i+1$. We will show that $S$ is a zero forcing set. Note that all of the vertices in $S$ have degree 1 and the remaining vertices in these rows through column $i$ are in a path, so they are filled in order from left to right. Now, the vertices in columns $1,...,i$ are filled. The vertex $(i,k)$ for $k=1,...,h$ has degree 4 and one unfilled neighbor, $(i+1,k)$.
Hence $(i,k)$ fills $(i+1,k)$. Since the remaining vertices across the rows in columns $i+2,...,j$ are paths, they are filled in order from left to right. Columns $1,...,j$ are filled. The vertex $(j,k)$ has degree 4 and one unfilled neighbor $(j+1,k)$. Hence $(j,k)$ fills $(j+1,k)$. Since the remaining vertices in columns $j+2,..,w$ are paths, they are filled in order from left to right across the row. Thus, the whole graph is filled and $Z(G) \leq h$.

 Now, assume $i \neq 1$, $j \neq w$, and $j=i+1$. We will show that $S$ is a zero forcing set. The first part of the forcing processing follows the same reasoning as the first condition for $i$ and $j$. Hence, the vertices in columns $1,...,i$ are filled. The vertex $(i,k)$ for $k=1,...,h$ has degree 4 and one unfilled neighbor $(j,k)$. Hence $(i,k)$ fills $(j,k)$. The vertex $(j,k)$ has degree 4 and one unfilled neighbor $(j+1,k)$. Hence $(j,k)$ fills $(j+1,k)$. Since the remaining vertices in columns $j+2,..,w$ are paths, they are filled in order from left to right across the row. Thus, the whole graph is filled and $Z(G) \leq h$.

Next, assume $i \neq 1$ and $j=w$. (Note: suppose $U=\{m,n\}$ such that $m=1$ and $n \neq w$. This case is isomorphic with the first assumption if $|i-j|=|m-n|$.) We will show that $S$ is a zero forcing set. The first part of the forcing processing follows the same reasoning as the first condition for $i$ and $j$. Hence, the vertices in columns $1,...,i$ are filled. The vertex $(i,k)$ for $k=1,...,h$ has degree 4 and one unfilled neighbor $(i+1,k)$. Hence $(i,k)$ fills $(i+1,k)$. Since the remaining vertices across the rows in columns $i+2,...,w$ are paths, so they are filled in order across the rows from left to right. Thus, the whole graph is filled and $Z(G) \leq h$.
            
Assume $i=1$ and $j=w$. We will show that $S$ is a zero forcing set. The vertices in $S$ have degree 4 and have 1 unfilled neighbor, $(2,k)$ for $k=1,...,h$. Hence, $(1,k)$ fills $(2,k)$ and all of column $2$ is filled. Since the remaining vertices across the rows in columns $3,..,w$ are paths, they are filled across in order from left to right. Columns $1,...,w$ are filled. Thus, the whole graph is filled and $Z(G) \leq h$.

\end{proof}

\subsection{Complete Graph by Complete Graph}

Another special product to consider is the hierarchical product of two complete graphs. This case is particularly interesting because every vertex is adjacent to all vertices in its row as well as all vertices in its column if the vertex is in the root set. Theorem \ref{completebound} gives an upper bound on the zero forcing number for any hierarchical product of a complete graphs by a complete graphs when not all vertices in are in root set.

\begin{thm}
\label{completebound}
Let $G = K_w(U)  \sqcap K_h$ and $h,w\geq 4$. If $|U| = r \neq w$, then $Z(G) \leq wh - (h + r)$.
\end{thm}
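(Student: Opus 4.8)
The plan is to exhibit an explicit zero forcing set $S$ of the prescribed size and verify directly that it forces. Since $K_w$ is vertex‑transitive, I may relabel the vertices of the $W$‑factor so that $U=\{1,2,\dots,r\}$; because $r=|U|\le w$ and $r\ne w$ we have $r\le w-1$, so column $w\notin U$, i.e.\ the vertices $(w,1),\dots,(w,h)$ have no neighbour inside their own column. (If $r=0$ the claim is immediate: $G$ is then $h$ disjoint copies of $K_w$, so $Z(G)=h(w-1)=wh-h$.) Assume $r\ge 1$, and set $S=V(G)\setminus H$ where
\[
H=\{(w,j):1\le j\le h-1\}\cup\{(1,2),(1,h)\}\cup\{(x,1):2\le x\le r\}.
\]
The three sets are pairwise disjoint (the first lies in column $w$, the others in columns $1,\dots,r$, and $(1,2),(1,h)$ lie in column $1$ while the last set avoids column $1$), and $h\ge 3$ gives $(1,2)\ne(1,h)$, so $|H|=(h-1)+2+(r-1)=h+r$ and hence $|S|=wh-(h+r)$.

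It remains to check that $S$ forces. The engine is the retained vertex $(w,h)$: it is filled, and its only neighbours are the other vertices of row $h$; since $H$ meets row $h$ only in $(1,h)$, the vertex $(w,h)$ forces $(1,h)$. Now column $1$ has the single unfilled vertex $(1,2)$ and row $h$ is full, so $(1,h)$ forces $(1,2)$, and column $1$ is entirely filled. Row $2$ now has the single unfilled vertex $(w,2)$, so $(1,2)$ forces it. Because column $1$ is full, for each $j\in\{3,\dots,h-1\}$ the row $j$ has the single unfilled vertex $(w,j)$, which $(1,j)$ forces; together with $(w,h)$ this fills all of column $w$ except $(w,1)$. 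Next, for each $x\in\{2,\dots,r\}$ the vertex $(x,2)$ sits in the now‑full row $2$ and its column $x$ has the single unfilled vertex $(x,1)$, so $(x,2)$ forces $(x,1)$. Finally row $1$ has the single unfilled vertex $(w,1)$, which $(1,1)$ forces. Every vertex is filled, so $S$ is a zero forcing set and $Z(G)\le wh-(h+r)$. (When $r<w-1$ the extra non‑root columns $r+1,\dots,w-1$ contain no holes at all, so they never impede the process; in fact they can fill column $w$ even faster.)

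The main obstacle — and the reason $S$ is not just ``one whole non‑root column plus $r$ more vertices'' — is a circular dependency: a vertex in a root column $x$ can force along its column only once its row is complete, but that row still contains the unfilled column‑$w$ vertex, and completing column $w$ needs the rows complete. When $r\le w-2$ a hole‑free non‑root column breaks the deadlock at once; but when $r=w-1$ the only non‑root column is column $w$ itself, so one must instead keep a single column‑$w$ vertex, namely $(w,h)$, in $S$ and exploit that — having no column‑neighbours — it can force along its row the moment that row has a unique hole. The remaining work is bookkeeping: $H$ must be chosen so that at every stage some line (row or column) contains exactly one unfilled vertex, i.e.\ so that the bipartite incidence pattern of the holes is tree‑like and can be peeled leaf by leaf. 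I expect the only genuinely fiddly point to be confirming this peeling for all $h,w\ge 4$ and all $1\le r\le w-1$, in particular the boundary configurations with $r$ close to $w$.
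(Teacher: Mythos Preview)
Your proof is correct and follows the same overall strategy as the paper --- exhibit an explicit set of size $wh-(h+r)$ and verify step by step that it forces --- but you choose a different zero forcing set. The paper leaves unfilled one entire \emph{root} column together with the remaining root-column vertices and one non-root vertex all lying in a single row; from this configuration the non-root column fills the missing root column in one step, the root columns then fill the missing row in a second step, and the last vertex falls in a third. Your set instead leaves unfilled most of the \emph{non-root} column $w$ and scatters the remaining $r+1$ holes across the root columns, which forces you to retain $(w,h)$ as a trigger and run a longer forcing chain. Both constructions work; the paper's is a little tidier (three propagation rounds, no separate $r=0$ case, no circular-dependency discussion needed), while yours comes with a nice explanation of \emph{why} a naive choice fails when $r=w-1$. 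Your final paragraph is redundant --- you have already carried out the ``peeling'' you say you expect to be fiddly --- so it can be dropped.
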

\begin{proof} Without loss of generality, let $U = \{i, i + 1 , i + 2,..., {w-1}\}$ for $1 \leq i \leq w - 1$. Thus, we can assume $w \notin U$. Let
    \begin{center}
    $S = V(G) - \{(w-1,1),...,(w-1,h),(w,h), (i, h), (i + 1, h), \ldots, (w - 2, h)\}$
        \end{center}
\noindent
That is, a minimal zero forcing set consists of every vertex in the hierarchical product except for all the vertices in one arbitrary column associated with a vertex in the root set, one vertex in arbitrary column associated with a vertex not in the root set in an arbitrary row, and all vertices in columns associated with vertices in the root set in an arbitrary row. In this example, vertices in column $(w-1)$, vertex $(w,h)$, and vertices $(i, h), (i + 1, h), \ldots, (w - 2, h)$ are chosen to be excluded from the zero forcing set. Note, $|S| = wh - (h + 1 + (r - 1)) = wh - (h + r)$. 


Since $w \notin U$, $(w,1)$ is not adjacent to any vertex in the column $w$. Since $(w, 1)$ is in row $1$, $(w, 1)$ has one unfilled neighbor, $({w-1},1)$. Therefore, $(w,1)$ can fill $({w-1},1)$. The same reasoning can be simultaneously applied to $(w,2),(w,3),(w,4),...,(w, h-2)$, and $(w,{h-1})$, so that they may fill $({w-1},2),({w-1},3),$  $({w-1},4),..., (w - 1, h-2)$ and $({w-1},{h-1})$.

Next, each column $i$ for $1 \leq i \leq w - 1$ has all but one vertex, $(i,h)$, filled. Every vertex in rows one through ${h-1}$ is filled. Thus, any filled vertex in column $i$ has one unfilled neighbor. Therefore, vertex $(i,1)$ can fill vertex $(i,h)$ for $1 \leq i \leq w - 1$.

The last unfilled vertex in the graph is $(w,h)$. Every vertex in row $h$ is filled.  Therefore, $(1,h)$ can fill $(w,h)$.
Thus, the entire graph is filled and $Z(G) \leq wh - (h + r)$.

\end{proof}

The propagation time for any complete graph by complete graph using the set $S$ in Theorem \ref{completebound} is three steps. 

\section{General Product Bound}
\label{generalboundsec}

In Section \ref{specialproducts}, we gave bounds on the zero forcing number of the hierarchical product when the factors were either a path, a cycle, or a complete graph for various root sets. However, we can now consider the zero forcing number of the hierarchical product for any two arbitrary graphs.

\begin{thm}
\label{generalbound}
If $G=W(U) \sqcap H$ where $W$ and $H$ are arbitrary graphs, then \[Z(G) \leq \min  \{Z(W)|V(H)|,  Z(H)|U|+(|V(W)|-|U|)|V(H)|\}.\]
\end{thm}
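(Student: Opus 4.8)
The plan is to prove the inequality by exhibiting, for each of the two quantities inside the $\min$, an explicit zero forcing set of $G$ of exactly that cardinality; since any zero forcing set yields an upper bound on $Z(G)$, the minimum of the two bounds is also an upper bound. Both sets are ``liftings'' of a zero forcing set of one factor: copy a minimum zero forcing set of $W$ into every row, or copy a minimum zero forcing set of $H$ into every root column while also taking all vertices that lie in non-root columns. Throughout I think of $G=W(U)\sqcap H$ as having rows indexed by $V(H)$ (each a copy of $W$) and columns indexed by $V(W)$ (a copy of $H$ when the index lies in $U$, edgeless among its own vertices otherwise).

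\textbf{First bound.} Let $B$ be a minimum zero forcing set of $W$, so $|B|=Z(W)$, and set $S_1=B\times V(H)$, which has $Z(W)\,|V(H)|$ vertices. Fix a forcing process of $B$ in $W$ and let $B=B_0\subseteq B_1\subseteq\cdots\subseteq B_m=V(W)$ be the filled sets after successive rounds. I would prove by induction on $t$ that $S_1$ forces $G$ so that after round $t$ the filled set of $G$ equals $B_t\times V(H)$. For the inductive step, suppose $u\in B_t$ forces $v$ in $W$, so $v$ is the unique neighbour of $u$ in $W$ not in $B_t$. In $G$, for each $y\in V(H)$ the neighbours of $(u,y)$ are the row vertices $(x,y)$ with $ux\in E(W)$ -- all lying in $B_t\times V(H)$ except $(v,y)$ -- together with, when $u\in U$, the column vertices $(u,y')$ with $yy'\in E(H)$, each of which is filled because $u\in B_t$. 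Hence $(u,y)$ has the single unfilled neighbour $(v,y)$ and forces it; performing this in every row gives the filled set $B_{t+1}\times V(H)$. At $t=m$ the whole graph is filled, so $Z(G)\le Z(W)\,|V(H)|$.

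\textbf{Second bound.} Let $C$ be a minimum zero forcing set of $H$, so $|C|=Z(H)$, and set
\[
S_2=\bigl((V(W)\setminus U)\times V(H)\bigr)\ \cup\ \bigl(U\times C\bigr),
\]
so $|S_2|=(|V(W)|-|U|)\,|V(H)|+Z(H)\,|U|$. Fix a forcing process of $C$ in $H$ with filled sets $C=C_0\subseteq\cdots\subseteq C_m=V(H)$. By induction on $t$ I would show that $S_2$ forces $G$ so that after round $t$ the filled set is $\bigl((V(W)\setminus U)\times V(H)\bigr)\cup\bigl(U\times C_t\bigr)$; at $t=m$ this is all of $V(G)$. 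For the inductive step, suppose $c\in C_t$ forces $c'$ in $H$. For each $u\in U$ the neighbours of $(u,c)$ in $G$ are the column vertices $(u,c'')$ with $cc''\in E(H)$, all filled except $(u,c')$, together with the row vertices $(x,c)$ with $ux\in E(W)$; such a row vertex is filled when $x\notin U$ (its whole column lies in $S_2$) and also when $x\in U$ (because $c\in C_t$). So $(u,c)$ has the unique unfilled neighbour $(u,c')$ and forces it; doing this in every root column advances the filled set as claimed. Hence $Z(G)\le Z(H)\,|U|+(|V(W)|-|U|)\,|V(H)|$, and combining the two bounds proves the theorem.

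\textbf{Where the work is.} The counting of $|S_1|$ and $|S_2|$ is immediate, and the individual forcing moves are routine; the one point needing care is the synchronization invariant -- that every ``cross'' neighbour of a forcing vertex (a column neighbour in the first construction, a row neighbour in the second) is already filled at the instant the force is applied, so that a move legal in the factor remains legal in $G$. This is precisely why the relevant fibres (all rows, respectively all root columns) must be advanced in lockstep, and it is the step I would write out most carefully.
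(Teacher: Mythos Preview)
Your proof is correct and follows essentially the same approach as the paper: both arguments exhibit the same two zero forcing sets---a minimum zero forcing set of $W$ copied into every row for the first bound, and a minimum zero forcing set of $H$ in each root column together with all of the non-root columns for the second. Your version is in fact more carefully written than the paper's, since you make explicit the inductive synchronization invariant (that every cross-fibre neighbour is already filled when a force is applied), whereas the paper simply asserts that the rows, respectively root columns, fill simultaneously.
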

\begin{proof}
First, we will show $Z(G) \leq Z(W)|V(H)|.$ Let $S_1$ be the set consisting of a minimal zero forcing set of the graph $W$ placed in every row. Note, $|S_1| = Z(W)|V(H)|$. If a vertex in $S_1$ is also in the root set, then the vertex can fill vertices in its row since every vertex in the column is already filled. If a vertex in $S_1$ is not in the root set, then the vertex can fill vertices in its row since the vertex is not adjacent any vertex in its column. Thus, every filled vertex can fill vertices in its row, and $S_1$ will fill every row simultaneously. Thus, the entire graph is filled and $Z(G)$ $\leq$\ $Z(W)|V(H)|.$

Next, we will show $Z(G) \leq$ $Z(H)|U| + (|V(W)|-|U|)|V(H)|$. Let $S_2$ be the set consisting of a minimal zero forcing set of graph $H$ placed in every column associated with a vertex in the root set as well as all the vertices in each column associated with a vertex not in the root set.  Note, $|S_2| = Z(H)|U| + (|V(W)|-|U|)|V(H)|$. Note, the only unfilled vertices of $G$ are in columns associated with a vertex in the root set. In addition, for every vertex in $S_2$, its only unfilled neighbors are in its column. Hence, since $S_2$ consists a minimal zero forcing set of graph $H$ placed in every column associated with a vertex in the root set, $S_2$ will fill every column simultaneously. Thus, the entire graph is filled and $Z(G)$ $\leq$  $Z(H)|U| + (|V(W)|-|U|)|V(H)|$.

Therefore, $Z(G) \leq \min  \{Z(W)|V(H)|, Z(H)|U|+(|V(W)|-|U|)|V(H)|\}.$

\end{proof}

Note, when $U = V(W)$, then $G = W(U) \sqcap H = W \Box H$ and Theorem \ref{generalbound} states $Z(G) \leq \min\{Z(W)|V(H)|, Z(H)|V(G)|\}$, which was first proven in \cite{AIM}. 

\section{Future Work}
In this paper, we found bounds on the zero forcing number of generalized hierarchical products of graphs. Multiple upper bounds were found for various hierarchical products. Equality was established for a few cases by using associated matrices to find a lower bound, but most of the products considered only have an upper bound. Going forward, associated matrices should be rigorously studied to establish lower bounds. Further, there are a number of other hierarchical products to consider. Additionally, a study of propagation time would be beneficial.
\newpage

\appendix

\section{Upper Bounds on the Zero Forcing Number}

This section contains tables that summarize the results given on the upper bound on the zero forcing number of hierarchical products. The tables are organized by product type.

\subsection{$G = P_w(U) \sqcap P_h$}
\begin{tabular}{ |c||c|c|} 
 \hline
 Root Set, $U$ & $Z(G)$ & $pt(G,S)$\\ \hline \hline
 $U=\{i\}$ & $\ceil[\big]{\frac{h}{2}}$ &  If $h$ is even:  \\
 $i=1$ or $w$  & &  $2(w-1) +\frac{h-2}{2}$\\
  & & If $h$ is odd:\\
  & & $2(w-1)+\frac{h-1}{2}$\\ \hline

 $U=\{i\}$ &  $h$ &  $w-1$ \\ 
 $i \neq 1,w$ & & \\\hline
 
$U=\{1,2\}$ &  $\ceil[\big]{\frac{h}{2}}$ & If $h$ is even:\\
or &  &  $h+1+2(w-2)$\\
$U=\{w-1,w\}$ & & \\
 & & If $h$ is odd:\\ 
 & &$h+2(w-2)$ \\\hline

 $|U|=2$  & $h$ & $w-1$ \\ 
 $U \neq \{1,2\}$ &  &  \\ 
 or $\{w-1,w\}$ & & \\\hline
 
\end{tabular}

\subsection{$G=C_w(U) \sqcap C_h$}
\begin{tabular}{ |c||c|c|} 
 \hline
 Root Set, $U$ & $Z(G)$ & $pt(G,S)$ \\ \hline \hline
 
 $U=i$  & $h+2$ &  $h$ is even: \\ 
  $i$ is any vertex &   & $\frac{h(w-2)}{2}+\frac{h-2}{2}$ \\
  & & $h$ is odd\\
  & & $\frac{h(w-2)}{2} +\frac{h-2}{2}$\\ 
  & &$+(w-2)+1$\\ \hline
 
 $U=\{i,j\}$ & $h$ & $h$ is even:\\
 $i$ is any vertex; &  & $\frac{h(w-2)}{2}+\frac{h-2}{2}$\\
 $j=i+1$ & & $h$ is odd: \\
 & & $\frac{h(w-2)}{2}+\frac{h-2}{2}$ \\
 & & $+(w-2)+1$\\ \hline
 
 $U=\{i,j\}$ &  $2h$ & $w-2$ \\ 
 $i$ is any vertex; &  & \\
 $j \neq i+1$ & &\\ \hline
 
 $U=\{i,j,k\}$ & $h+2$ & $h$ is even:\\
 $i$ is any vertex; & & $\frac{h(w-2)}{2}+\frac{h-2}{2}$ \\
 $j=i+1$ & & $h$ is odd: \\
 $k=i+2$ & & $\frac{h(w-2)}{2}+\frac{h-2}{2}$ \\
  & & $+(w-2)+1$ \\\hline
 
 $U=$ & $2h-2$ & $\frac{(h-2)}{2}+{w-2}$\\
 $\{i,j,k,...,w-1\}$ & & \\ \
  & &\\ \hline

\end{tabular}

\subsection{$G=P_w(U) \sqcap C_h$}

\begin{tabular}{ |c||c|c|} 
 \hline
 Root Set, $U$ & $Z(G)$ & $pt(G,S)$\\ \hline \hline
 
&  & $h$ is even:\\
$U=\{1$\} or & $\ceil[\big]{\frac{h}{2}}$ & $2(w-1)+\frac{h-2}{2}$\\
$\{w\}$ & &\\
 & &\\\hline

$U=\{i\}$ & $h$ & $w-1$\\
$i \neq 1,w$ & & \\ \hline
 
 $U=\{i,j\}$ & $h$ & $w-1$\\
 $i \neq 1,w-1$ & &\\
 $j \neq 2, w$ & & \\\hline
 
\end{tabular}

\subsection{$G=K_w (U) \sqcap K_h$}

\begin{tabular}{ |c||c|c|} 
 \hline
 Root Set, $U$ & $Z(G)$ &  $pt(G,S)$ \\ \hline \hline
 $U=i$ & $(w-1)*(h-1) $ &  $3$ \\ 
 $i \neq w$ &  $+(w-r)-1$ &\\ \hline
\end{tabular}

\newpage

\end{document}